\definecolor{myred}{RGB}{251,154,133}
\definecolor{myblue}{RGB}{153,206,227}
\definecolor{mylightblue}{RGB}{0, 150, 255}
\definecolor{mygreen}{RGB}{32, 210, 64}
\definecolor{mygray}{RGB}{220, 220, 220}
\tikzset{snake it/.style={decorate, decoration=snake}}
\newtheorem{theorem}{Theorem}
\newtheorem{lemma}{Lemma}[section]
\newtheorem{remark}{Remark}
\newtheorem{Proposition}{Proposition}
\DeclareFontFamily{OML}{rsfs}{\skewchar\font'177}
\DeclareFontShape{OML}{rsfs}{m}{n}{ <5> <6> rsfs5 <7> <8> <9>
rsfs7 <10> <10.95> <12> <14.4> <17.28> <20.74> <24.88> rsfs10 }{}
\DeclareMathAlphabet{\mathfs}{OML}{rsfs}{m}{n}
\newcommand{\BH}{{\mathbb{H}}}
\newcommand{\BI}{{\mathbb{I}}}
\newcommand{\BN}{{\mathbb{N}}}
\newcommand{\BZ}{{\mathbb{Z}}}
\newcommand{\CH}{{\mathcal{H}}}
\newcommand{\ind}{{\mathbbm{1}}}
\newcommand{\prob}{{\bf P}}
\newcommand{\bae}{\begin{equation}\begin{aligned}}
\newcommand{\eae}{\end{aligned}\end{equation}}
\newcommand{\ev}{\mathbf{E}}
\newcommand{\diam}{\text{diam}}
\newcommand{\harm}{\CH}
\def\beq{ \begin{equation} }
\def\eeq{ \end{equation} }
\def\square{\vcenter{\vbox{\hrule height .4pt
  \hbox{\vrule width .4pt height 5pt \kern 5pt
        \vrule width .4pt} \hrule height .4pt}}}
\begin{document}

\title{Stationary DLA is well defined}

\author{Eviatar B. Procaccia}
\address[Eviatar B. Procaccia\footnote{Research supported by NSF grant DMS-1812009}]{Texas A\&M University}
\urladdr{www.math.tamu.edu/~procaccia}
\email{eviatarp@gmail.com}

\author{Jiayan Ye}
\address[Jiayan Ye]{Texas A\&M University}
\urladdr{http://www.math.tamu.edu/~tomye}
\email{tomye@math.tamu.edu }
 
\author{Yuan Zhang}
\address[Yuan Zhang]
{Peking University}
\email{zhangyuan@math.pku.edu.cn}

\maketitle

\begin{abstract}
In this paper, we construct an infinite stationary Diffusion Limited Aggregation (SDLA) on the upper half planar lattice, growing from an infinite line, with local growth rate proportional to the stationary harmonic measure. We prove that the SDLA is ergodic with respect to integer left-right translations.
\end{abstract}

%\tableofcontents

\section{Introduction}

Diffusion limited aggregation (DLA) is a set valued process first defined by Witten and Sander \cite{DLA_introduction} in order to study physical systems governed by diffusion. DLA is defined recursively as a process on subsets of $\BZ^2$. Starting from $A_0=\{(0,0)\}$, at each time a new point $a_{n+1}$ sampled from the harmonic probability measure on the outer vertex boundary of $A_n$ is added to $A_n$. Intuitively, $a_{n+1}$ is the first place that a random walk starting from infinity visits $\partial^{out}A_n$.

In many experiments and real world phenomenon the aggregation grows from some initial boundary instead of a single point i.e. ions diffusing in liquid until they connect a charged container floor (see \cite{ball1986dla} for numerous examples). Different aggregation processes, such as Eden and Internal DLA, with boundaries were studied in \cite{MR3619794,Berger-Kagan-Procaccia}, and universal phenomenon such as a.s. non existence of infinite trees were proved. 

 In this paper we construct an infinite stationary DLA (SDLA) on the upper half planar lattice, growing from an infinite line. Along the way we prove that this infinite stationary DLA can be seen as a limit of DLA growing from a long finite line. This Allows one to use the more symmetric and amenable model of SDLA to study local behavior of DLA. In addition SDLA admits new phenomena not observed in the full lattice DLA. One such interesting conjectured phenomenon, which results from the competition between different trees in the SDLA, is that eventually (and in finite time) every tree in the SDLA ceases to grow.

\section{Statement of Result}

The main result we obtained in the paper is the well-definition of the (infinite) SDLA according to its transition rate given by the stationary harmonic measure, starting from the infinite initial configuration $L_0$. %We want to show existence of a well defined process process admitting the generator given by the transition rates. 
%%%%%%%%%%%%%%%%%%%%%%%%%%%%%%%%%%%%%%%%%%%%%%%%%%%%%%%%%%%%%%%%%%%%%%
\begin{theorem}
\label{theorem_well_define}
Let $t>0$ and $A_0=L_0$, then there is a well defined SDLA process $\{A^\infty_s\}_{s\le t}$.
\end{theorem}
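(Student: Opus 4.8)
The plan is to realize $\{A^\infty_s\}_{s\le t}$ as the local limit of finite-line aggregations and to control that limit through a finite-influence estimate; the reason a direct construction is problematic is exactly that the transition rate at a site $x$, namely $\harm_{A_s}(x)$, depends on the entire (infinite) configuration. For each $N\in\BN$ let $\{A^N_s\}_{s\le t}$ be the finite-line aggregation started from the segment $A^N_0:=L_0\cap([-N,N]\times\{0\})$ in which every outer-boundary site $x$ of $A^N_s$ is added at rate $\harm_{A^N_s}(x)$. First I would check that each $A^N$ is well defined on $[0,t]$: it is a pure-growth continuous-time Markov chain on finite subsets of $\BZ\times\BZ_{\ge0}$, and using the uniform bound $\harm_A(x)\le C$ on the single-site stationary harmonic measure established earlier, the total jump rate out of a configuration $A$ is at most $C|\partial^{\mathrm{out}}A|\le 4C|A|$; hence the rate of the $k$-th jump is at most $4C(|A^N_0|+k-1)$, and since $\sum_k\big(4C(|A^N_0|+k-1)\big)^{-1}=\infty$ the chain does not explode.

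The core step is a coupling of $A^N$ and $A^M$, $N<M$, on a common probability space: attach to each site $x$ an independent rate-$C$ Poisson clock together with an i.i.d.\ sequence of uniform marks, and in either process accept a ring at $x$ at time $s$ as a growth event precisely when $x\notin A_{s^-}$ and the mark lies below $\harm_{A_{s^-}}(x)/C$. For a fixed box $B$ I would then prove
\[
\pr\big[\,A^N_s\cap B\neq A^M_s\cap B\ \text{ for some }\ s\le t\,\big]\ \xrightarrow[\ N\to\infty\ ]{}\ 0.
\]
The mechanism is discrepancy propagation: the two processes can treat a ring at $x$ differently only when $\harm_{A^N_{s^-}}(x)\neq\harm_{A^M_{s^-}}(x)$, and $|\harm_{A^N_{s^-}}(x)-\harm_{A^M_{s^-}}(x)|$ is small unless $A^N_{s^-}$ and $A^M_{s^-}$ already disagree within a controlled distance of $x$ — the quantitative stability of the stationary harmonic measure under local modifications of the aggregate. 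Iterating, a discrepancy inside $B$ by time $t$ forces a cascade of nearby accepted growth events linking $B$ to the region $\{|x|>N\}$ where the two initial configurations differ, and by the rate bound of the first step this cascade has vanishing probability as $N\to\infty$.

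Granting the above, I would define $A^\infty_s$ by $A^\infty_s\cap B:=\lim_N A^N_s\cap B$ — well defined because, almost surely, for each box $B$ the family $\{A^N_s\cap B:s\le t\}$ is eventually constant in $N$ — verify consistency along an exhaustion of $\BZ\times\BZ_{\ge0}$ by boxes, and so obtain a set-valued process $\{A^\infty_s\}_{s\le t}$. What remains is to confirm that this process is the SDLA, i.e.\ that $x$ is added at rate $\harm_{A^\infty_s}(x)$; this follows from the convergence $\harm_{A^N_s}(x)\to\harm_{A^\infty_s}(x)$ — continuity of the stationary harmonic measure under local convergence of the underlying set, available from the earlier sections — together with the fact that the accepted rings of $A^N$ in the coupling converge to those of $A^\infty$.

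The main obstacle, I expect, is the finite-influence estimate, and inside it the a priori control of the growth uniformly in $N$ needed to close the discrepancy-propagation argument. Since $\harm_A(x)$ depends on all of the (in the limit, infinite) aggregate, one must show quantitatively that distant growth perturbs the local harmonic measure only slightly, and this has to be combined with a proof that no part of the aggregate grows anomalously fast — no tall thin spike, say — before time $t$; this is delicate because fast growth is at once the behaviour to be excluded and the behaviour that would wreck the harmonic-measure stability used to exclude it. I would handle this by a bootstrap over short time increments: a crude deterministic rate bound (from the uniform single-site bound on $\harm$) gives a rough envelope for $A^N_s$; on the event that this envelope holds, the sharp stability estimate for $\harm$ applies; that delivers the discrepancy bound on the increment; and an induction on time propagates the conclusion over all of $[0,t]$.
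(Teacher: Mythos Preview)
Your overall architecture — construct $A^N$ from finite segments, couple them, prove a finite-influence estimate, take the local limit, and verify the limit has the right rate — matches the paper's. But two of the quantitative inputs you invoke are wrong, and this breaks the argument at exactly the places you identify as delicate.

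First, there is no uniform bound $\harm_A(x)\le C$. The bound established in the preliminaries is $\harm_A(x)\le C\sqrt{x_2}$, growing with the height of $x$. Consequently the coupling cannot be built from rate-$C$ site clocks; the paper attaches to each directed edge $x\to y$ a Poisson process of rate $\sqrt{x_2}\vee 1$ (this is the interface process), and the thinning is against that. Your non-explosion argument for the finite $A^N$ then also fails as written: with rates $\sqrt{x_2}$ the total rate out of a configuration is not bounded by a constant times its cardinality, and one needs instead the interface-process bound (Proposition~\ref{lemma_well_definition_1}). More importantly, the ``crude deterministic rate bound'' you plan to use to seed the bootstrap does not exist. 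The paper replaces it by a genuinely probabilistic a priori estimate: the interface process starting from $[-n,n]\times\{0\}$ stays inside $[-n-\log n,\,n+\log n]\times[0,\log n]$ up to time $1$ except with probability $n^{-C}$ (Theorem~\ref{thm_log_growth}). All later bounds are proved for the process \emph{truncated} at the exit time of this box, so that heights are at most $\log n$ and hence all single-site rates are at most $\sqrt{\log n}$.

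Second, discrepancy creation is not local in the way you describe. A ring at an edge $\vec e=x\to y$ with $x\in \hat V^n\cap\hat V^{n+1}$ and $y\notin \hat V^n\cup\hat V^{n+1}$ (the class $E_1$ in the paper) can produce a discrepancy at $y$ \emph{arbitrarily far} from the current symmetric difference, because $\harm_{\hat V^n}(\vec e)$ and $\harm_{\hat V^{n+1}}(\vec e)$ can differ whenever the aggregates differ anywhere. So a discrepancy in $K$ need not be linked to $\{|x|>N\}$ by any chain of nearby growth events, and your cascade picture collapses. The paper controls this non-local creation by the random-walk identity
\[
\harm_{\hat V^n\cap\hat V^{n+1}}(\vec e)-\harm_{\hat V^n\cup\hat V^{n+1}}(\vec e)
=\sum_{z\in \hat V^n\triangle\hat V^{n+1}}\harm_{\hat V^n\cup\hat V^{n+1}}(z)\,\prob_z\big(S_{\tau_{(\hat V^n\cap\hat V^{n+1})\cup L_0}-1}=y,\ S_{\tau_{(\hat V^n\cap\hat V^{n+1})\cup L_0}}=x\big),
\]
and then by a hitting-probability estimate (Lemma~\ref{lem_hitting}): a simple random walk started at height at most $\log n$ near $|x_1|\approx n$ reaches a box of width $n^{1-3\alpha}$ around the origin before $L_0$ only with probability $O(n^{-1-2.5\alpha})$. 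The actual argument classifies each new discrepancy as ``good'' (within $n^{1-5\alpha}$ of an existing one) or ``bad'', bounds the total number of discrepancies by $n^\alpha$ (Lemma~\ref{lemma_number_discrepancies}), and shows that reaching $K$ requires either a single ``devastating'' bad discrepancy or two bad ones, each of which costs roughly $n^{-1+O(\alpha)}$. Your proposal contains neither the height truncation that makes these rates manageable nor the random-walk input that quantifies the non-local effect; both are essential, and no bootstrap over short time increments will substitute for them.
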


\begin{remark}
The result remains true if one replace the initial state $L_0$ by any subset  $A_0$ that can be seen as a connected forest of logarithmic horizontal growth rate. To be precise, $A_0$ can be written as $\cup_{n=-\infty}^\infty Tree^n_0$, where $Tree^n_0$ is connected for each $n$, with $Tree^n_0\cap L_0=(n,0)$ and moreover $\diam(Tree^n_0)\ge \log n$ for only finite number of $n$'s. We present the proof for $A_0=L_0$ for simplicity but without loss of (much) generality. 
\end{remark}

%\note{Remark has been rewritten. The initial configuration should be a connected forest with given growth rate}

A major tool one obtains for the study of SDLA is ergodicity of the process.
\begin{theorem}\label{thm:ergodicity}
For every $t>0$, $A^\infty_t$ is ergodic with respect to shift in $\BZ\times\{0\}$.
\end{theorem}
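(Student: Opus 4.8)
The plan is to exhibit $A^\infty_t$ as a translation--equivariant factor of a mixing (Bernoulli) $\BZ$--system and then invoke the elementary fact that a factor of an ergodic system is ergodic. Concretely, the construction behind Theorem~\ref{theorem_well_define} produces $A^\infty_t$ on a probability space $(\Omega,\CF,\p)$ carrying only ``local'' randomness: a field of i.i.d.\ growth clocks indexed by the vertices of the upper half--planar lattice, together with the independent random--walk samples used to build the stationary harmonic measure. The horizontal shift $\theta$ by an element of $\BZ\times\{0\}$ acts on $\Omega$ preserving $\p$, and since this action is of product type the shift is mixing, in particular ergodic. The first thing I would verify is that the map $\Phi:\omega\mapsto A^\infty_t(\omega)$ delivered by the construction is $\theta$--equivariant, i.e.\ $A^\infty_t(\theta\omega)=\tau A^\infty_t(\omega)$ for the corresponding spatial translation $\tau$; this is plausible because the stationary harmonic measure is translation covariant and the clocks and walk samples are translation invariant in law, so each finite--time update commutes with $\theta$, but one must check that the a.s.\ limits taken in the construction (the limit defining $\CH$, and the exhaustion/cutoff limits used to prove well--definedness) are themselves equivariant and do not secretly depend on a centering choice. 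Granting this, the argument closes in one line: if $B$ is an event in configuration space invariant under $\tau$, then $\Phi^{-1}(B)=\{\omega:A^\infty_t(\omega)\in B\}$ is $\theta$--invariant, hence has $\p$--measure $0$ or $1$ by ergodicity of $(\Omega,\p,\theta)$, so $\p(A^\infty_t\in B)\in\{0,1\}$, which is the ergodicity of $A^\infty_t$.

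The delicate point — and the place I expect to spend the most effort — is precisely the compatibility of the limiting construction with a single equivariant coupling. The finite approximations, DLA grown from a segment $[-N,N]\times\{0\}$, are centered at the origin and are therefore \emph{not} shift equivariant; one must argue that their $N\to\infty$ limit is, and that this limit coincides on the common probability space with the dynamics driven by the clocks. If one prefers not to re--engineer the construction, the fallback I would pursue is to prove mixing directly: show that for cylinder events $B_1,B_2$ determined by the restriction of $A^\infty_t$ to finitely many columns, $\p\big(B_1\cap\tau_k^{-1}B_2\big)\to\p(B_1)\p(B_2)$ as $k\to\infty$ (Ces\`aro averaging then gives ergodicity, and in fact this yields mixing). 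This reduces to showing that the configuration of $A^\infty_t$ inside a window $W$ is, up to an error vanishing as the enlargement grows, a measurable function of the clocks and walk samples in a bounded neighbourhood $W'\supset W$ only; and that in turn rests on two inputs already present in the paper's analysis: decay estimates for the stationary harmonic measure (the influence on $\CH_{A_s}(x)$, $x\in W$, of the aggregate in a far--away column is small), and the fact, established en route to Theorem~\ref{theorem_well_define}, that on $[0,t]$ the aggregate stays thin (logarithmic horizontal growth) with high probability, so that near $W$ only finitely many, spatially localized, growth events occur up to time $t$.

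In either route the crux is the same: decoupling the genuinely local growth dynamics from the global influence funneled through the harmonic measure. I regard the factor argument as the cleaner of the two and would present it as the main proof, relegating the decay/thinness estimates to the role of lemmas invoked to justify the equivariant limiting construction, and keeping the direct mixing estimate in reserve in case the construction as written does not transparently yield an equivariant version.
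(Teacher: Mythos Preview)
Your fallback route is precisely what the paper does. The paper proves strong mixing directly: given two finite windows $K_1,K_2$ at horizontal distance exceeding $2n$, it plants two copies $A^n_t(1),A^n_t(2)$ of the finite--segment DLA centered at points of $K_1,K_2$ respectively, all coupled through the same interface graphical representation. Theorem~\ref{thm_log_growth} (logarithmic growth of the interface) guarantees that with probability $\to 1$ neither copy reaches radius $n/2$, so the two copies live on disjoint Poisson clocks and are conditionally independent; Theorem~\ref{thm_summable} guarantees that each copy agrees with $A^\infty_t$ on its window with probability $\to 1$. Combining these gives $\prob(\mathfs{B}_1\cap\mathfs{B}_2)\to\prob(\mathfs{B}_1)\prob(\mathfs{B}_2)$ for cylinder events, i.e.\ strong mixing. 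So the two inputs you isolate (height/thinness control and locality of the approximation) are exactly the ones used, and your description of how they combine is accurate.

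Your primary route, exhibiting $A^\infty_t$ as an equivariant factor of the Bernoulli shift on the clocks and thinning variables, is a genuinely different packaging. It is conceptually cleaner and would also yield the stronger conclusion that $A^\infty_t$ is a factor of an i.i.d.\ process (hence Bernoulli, not merely ergodic). You have correctly located the one nontrivial step: the approximants $A^n_t$ are origin--centered, so equivariance of the limit is not automatic from the construction as written. Establishing it amounts to showing that the a.s.\ limit along $[-n,n]\times\{0\}$ agrees with the a.s.\ limit along any translated exhaustion $[-n+k,n+k]\times\{0\}$, which follows from the same summability estimate (Theorem~\ref{thm_summable}) applied to the comparison of two overlapping segments. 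The paper sidesteps this by working directly with recentered finite approximants in the mixing argument, which implicitly uses the same fact but avoids stating an equivariance lemma. Either approach closes; the paper's is shorter given what has already been proved, while yours would give a slightly stronger structural statement.
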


\section{Preliminaries}
We first recall a number of notations and results from a previous paper by two of the authors \cite{Procaccia_upper_2019}: Let $\BH=\{(x,y)\in \mathbb{Z}^2, y\ge 0\}$ be the upper half plane (including $x$-axis), and $S_n, n\ge 0$ be a 2-dimensional simple random walk. For any $x\in \mathbb{Z}^2$, we will write 
$$
x=(x_1,x_2)
$$
with $x_i$ denote the $i$th coordinate of $x$, and $\|x\|=\|x\|_1=|x_1|+|x_2|$. Then let $L_n, D_n\subset \mathbb{Z}^2$ be defined as follows: for each nonnegative integer $n$, define
$$
L_n=\{(x,n), \ x\in \mathbb{Z}\}
$$
to be the horizontal line of height $n$. For each subset $A\subset \mathbb{Z}^2$ we define the stopping times 
$$
\bar \tau_A=\min\{n\ge 0, \ S_n\in A \}
$$
and
$$
\tau_A=\min\{n\ge 1, \ S_n\in A \}.
$$
%\note{Some notations we did not use have been deleted}

For any subsets $A_1\subset A_2$ and $B$ and any $y\in \mathbb{Z}^2$, by definition one can easily check that 
\begin{equation} 
\label{basic 1}
\begin{aligned}
&\prob_y\left(\tau_{A_1}<\tau_{B}\right)\le \prob_y\left(\tau_{A_2}<\tau_{B}\right), \\
&\prob_y\left(\bar\tau_{A_1}<\bar\tau_{B}\right)\le \prob_y\left(\bar\tau_{A_2}<\bar\tau_{B}\right), 
\end{aligned}
\end{equation} 
and that 
\begin{equation} 
\label{basic 2}
\begin{aligned}
\prob_y\left(\tau_{B}<\tau_{A_2}\right)\le \prob_y\left(\tau_{B}<\tau_{A_1}\right),\\
\prob_y\left(\bar\tau_{B}<\bar\tau_{A_2}\right)\le \prob_y\left(\bar\tau_{B}<\bar\tau_{A_1}\right),
\end{aligned}
\end{equation} 
where $\prob_y(\cdot)=\prob(\cdot|S_0=y)$. Then in \cite{Procaccia_upper_2019} we defined the stationary harmonic measure on $\BH$ which will serve as the Poisson intensity in our continuous time DLA model. For any $B\subset \BH$, any edge $\vec e=x\to y$ with  $x\in B$, $y\in \BH\setminus B$ and any $N$, we define 
\begin{equation} 
\label{harmonic measure edge}
\harm_{B, N}(\vec e)=\sum_{z\in L_N\setminus B} \prob_z\left(S_{\bar \tau_{B\cup L_0}}=x, S_{\bar \tau_{B\cup L_0}-1}=y\right)
\end{equation} 
By definition, a necessary condition for $\harm_{B, N}(\vec e)>0$ is $y\in \partial^{out}B$ and $|x-y|=1$. And for all $x\in B$, we can also define  
\begin{equation} 
\label{harmonic measure point old}
\harm_{B, N}(x)=\sum_{y: \ \vec e=(x,y)}\harm_{B, N}(\vec e) =\sum_{z\in L_N\setminus B} \prob_z\left(S_{\bar \tau_{B\cup L_0}}=x\right).
\end{equation} 
And for each point $y\in \partial^{out}B$, we can also define 
\begin{equation} 
\label{harmonic measure point new}
\hat \harm_{B, N}(y)=\sum_{\vec e=(x,y), ~x\in B}\harm_{B, N}(\vec e) =\sum_{z\in L_N\setminus B} \prob_z\left(\tau_{B}\le \tau_{L_0} , S_{\bar \tau_{B\cup L_0}-1}=y\right).
\end{equation} 
By coupling and strong Markov property, we show that $N\to \harm_{A,N}(e)$ is bounded and monotone in $N$. Thus we proved that 
\begin{Proposition}[Proposition 1, \cite{Procaccia_upper_2019}]
\label{proposition_well_define}
For any $B$ and $\vec e$ as above, there is a finite $\harm_{B}(\vec e)$ such that 
\begin{equation} 
\lim_{N\to\infty} \harm_{B, N}(\vec e)=\harm_{B}(\vec e). 
\end{equation} 
\end{Proposition}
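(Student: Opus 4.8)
The plan is to prove that for every fixed $B$ and $\vec e=x\to y$ as in the statement the sequence $N\mapsto\harm_{B,N}(\vec e)$ is nonnegative, finite, and eventually nonincreasing in $N$; a monotone bounded sequence converges, and we then set $\harm_B(\vec e)$ to be its limit. We restrict throughout to $N$ large enough that both $x$ and $y$ lie strictly below $L_N$ — this excludes only finitely many $N$ and so is irrelevant for the limit — and we note that each term is finite, uniformly in $N$, since $\harm_{B,N}(\vec e)\le\harm_{B,N}(x)$ and $\harm_{B,N}(x)$ is controlled by a standard Green's function (equivalently gambler's ruin) estimate: the expected time the random walk started at $x$ spends on the line $L_N$ before reaching $L_0$ is finite and bounded independently of $N$.

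The key step is a first-passage decomposition relating consecutive levels. Fix an admissible $N$ and start the walk at $z\in L_{N+1}\setminus B$. On the event $\{S_{\bar\tau_{B\cup L_0}}=x,\ S_{\bar\tau_{B\cup L_0}-1}=y\}$ the walk absorbs at $x$, which lies strictly below $L_N$; being nearest-neighbour it must hit $L_N$ to get there, and it cannot touch $B\cup L_0$ beforehand, since $L_0$ lies below $L_N$ while any point of $B$ reachable from $z$ without first meeting $L_N$ has second coordinate $\ge N+1$ and hence differs from $x$ (and a first meeting point lying in $L_N\cap B$ also differs from $x$). Thus the walk first hits $L_N\cup B$ at some $w\in L_N\setminus B$; applying the strong Markov property at $\sigma:=\bar\tau_{L_N\cup B}$ (at which time the walk has not yet met $B\cup L_0$) and summing over $z$ gives
\begin{equation*}
\harm_{B,N+1}(\vec e)=\sum_{w\in L_N\setminus B}q_N(w)\,\prob_w\!\left(S_{\bar\tau_{B\cup L_0}}=x,\ S_{\bar\tau_{B\cup L_0}-1}=y\right),\qquad q_N(w):=\sum_{z\in L_{N+1}\setminus B}\prob_z\!\left(S_{\bar\tau_{L_N\cup B}}=w\right),
\end{equation*}
whereas, by definition, $\harm_{B,N}(\vec e)=\sum_{w\in L_N\setminus B}\prob_w(S_{\bar\tau_{B\cup L_0}}=x,\ S_{\bar\tau_{B\cup L_0}-1}=y)$.

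It therefore suffices to show $q_N(w)\le1$ for each $w\in L_N\setminus B$. Since $w\notin B$, the event $\{S_{\bar\tau_{L_N\cup B}}=w\}$ forces the walk to reach $w\in L_N$ strictly before touching $B$, so $\{S_{\bar\tau_{L_N\cup B}}=w\}\subseteq\{S_{\bar\tau_{L_N}}=w\}$ and $\prob_z(S_{\bar\tau_{L_N\cup B}}=w)\le\prob_z(S_{\bar\tau_{L_N}}=w)$; enlarging the index set from $L_{N+1}\setminus B$ to $L_{N+1}$ (all terms being nonnegative) gives $q_N(w)\le\sum_{z\in L_{N+1}}\prob_z(S_{\bar\tau_{L_N}}=w)$. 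By horizontal translation invariance of the walk, $\prob_{z}(S_{\bar\tau_{L_N}}=w)=\prob_{(0,N+1)}(S_{\bar\tau_{L_N}}=(w_1-z_1,N))$, and letting $z_1$ range over $\BZ$ turns the sum into $\sum_{u\in L_N}\prob_{(0,N+1)}(S_{\bar\tau_{L_N}}=u)=\prob_{(0,N+1)}(\bar\tau_{L_N}<\infty)=1$, the final equality because the second coordinate of the walk is a recurrent one-dimensional (lazy) random walk. Substituting $q_N(w)\le1$ into the displayed identity yields $\harm_{B,N+1}(\vec e)\le\harm_{B,N}(\vec e)$; hence the sequence is nonincreasing and bounded below by $0$, so it converges, which is the assertion of the proposition.

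The step I expect to be the main obstacle is making the decomposition in the second paragraph fully rigorous when $B$ is not contained below $L_N$: one must argue carefully that every trajectory from $L_{N+1}$ contributing to $\harm_{B,N+1}(\vec e)$ really does cross $L_N$ before meeting $B$, and that trajectories meeting $B$ first contribute exactly $0$ to this edge quantity. This is precisely what forces the restriction to $N$ with $x,y$ below $L_N$; once that bookkeeping is in place, the strong Markov reduction, the translation-invariance identity, and the monotone-convergence conclusion are all routine.
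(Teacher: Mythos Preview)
The proposal is correct and takes essentially the same approach as the paper. The paper (citing \cite{Procaccia_upper_2019}) states only that the result follows from showing the sequence $N\mapsto\harm_{B,N}(\vec e)$ is bounded and monotone via ``coupling and strong Markov property''; your argument does exactly this --- the strong Markov decomposition at the first crossing of $L_N$ together with the translation-invariance identity $\sum_{z\in L_{N+1}}\prob_z(S_{\bar\tau_{L_N}}=w)=1$ to obtain $q_N(w)\le1$ is precisely the coupling alluded to, and yields the nonincreasing monotonicity and hence convergence.
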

And $\harm_{B}(\vec e)$ is called the stationary harmonic measure of $\vec e$ with respect to $B$. The following limits $\harm_{B}(x)=\lim_{N\to\infty}\harm_{B, N}(x)$ and $\hat \harm_{B}(y)=\lim_{N\to\infty}\hat \harm_{B, N}(y)$ also exist \cite{Procaccia_upper_2019} and are called the stationary harmonic measure of $x$ and $y$ with respect to $B$.  

Then for any connected $B\subset\BH$ such that $B\cap L_0\not=\emptyset$, and any $x\in B$, $\harm_B(x)$ was proved to have the following up bounds that depends only on the height of $x$:
\begin{theorem}[Theorem 1, \cite{Procaccia_upper_2019}]
\label{theorem: uniform_path}
There is some constant $C<\infty$ such that for each connected $B\subset \BH$ with $L_0\subset B$ and each $x=(x_1,x_2)\in B\setminus L_0$, and any $N$ sufficiently larger than $x_2$
\begin{equation} 
\label{uniform bound}
\harm_{B, N}(x)\le C x_2^{1/2}. 
\end{equation} 
\end{theorem}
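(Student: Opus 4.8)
The plan is to bound $\harm_{B,N}(x)$ by first shrinking the aggregate down to a thin path, then rewriting the harmonic measure of the single vertex $x$ as an expected number of visits of the far line $L_N$, and finally reading off the exponent $1/2$ from Beurling's projection estimate. Write $h=x_2$; we may assume $h\ge 2$ and $N\ge 2h$, the remaining cases being trivial once $C$ is large. Since $B$ is connected and contains $L_0$, it contains a simple lattice path $\gamma$ with $x\in\gamma$ and $\gamma\cap L_0\neq\emptyset$. If a walk from $z\in L_N\setminus B$ hits $B$ (note $B\cup L_0=B$) for the first time at $x$, it has not met the smaller set $L_0\cup\gamma\subset B$ beforehand, and since $x\in\gamma$ this forces $S_{\bar\tau_{L_0\cup\gamma}}=x$ as well; hence $\prob_z(S_{\bar\tau_B}=x)\le\prob_z(S_{\bar\tau_{L_0\cup\gamma}}=x)$ for all such $z$, and because $L_N\setminus B\subset L_N\setminus(L_0\cup\gamma)$ we obtain $\harm_{B,N}(x)\le\harm_{L_0\cup\gamma,N}(x)$. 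It therefore suffices to bound $\harm_{A,N}(x)$ with $A:=L_0\cup\gamma$ for an arbitrary such path $\gamma$.

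The next step is a last-step decomposition. A walk hitting $A$ first at $x$ is, one step earlier, at some neighbour $y\sim x$ with $y\notin A$; summing over the time of that step and using the Markov property, $\prob_z(S_{\bar\tau_A}=x)=\tfrac14\sum_{y\sim x,\,y\notin A}G_A(z,y)$, where $G_A(z,y)=\sum_{n\ge 0}\prob_z(S_n=y,\ \bar\tau_A>n)$. Summing over $z\in L_N$ and using the reversibility (symmetric transition kernel) of simple random walk killed on $A$ gives $\sum_{z\in L_N}G_A(z,y)=\mathbf{E}_y[\#\{0\le n<\bar\tau_A:S_n\in L_N\}]$, so that $\harm_{A,N}(x)=\tfrac14\sum_{y\sim x,\,y\notin A}\mathbf{E}_y[\#\text{visits of }L_N\text{ before }\bar\tau_A]$ is a sum of at most four terms, each $y$ at height $\le h+1$. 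For a fixed such $y$, decompose at the first hitting time of $L_N$: the expectation equals $\prob_y(\bar\tau_{L_N}<\bar\tau_A)$ times the expected number of visits of $L_N$ before $\bar\tau_A$ from the first hitting point $w_0\in L_N$. Replacing $A$ by $L_0\subset A$ only increases the latter, and a one-dimensional gambler's-ruin computation (consecutive returns to $L_N$ before $L_0$ are geometric with success probability of order $1/N$) bounds it by $C_1N$. Since the walk must cross $L_{2h}$ before reaching $L_N$, and the height coordinate is a martingale, $\prob_y(\bar\tau_{L_N}<\bar\tau_A)\le\prob_y(\bar\tau_{L_{2h}}<\bar\tau_A)\cdot\max_{w\in L_{2h}}\prob_w(\bar\tau_{L_N}<\bar\tau_{L_0})=\prob_y(\bar\tau_{L_{2h}}<\bar\tau_A)\cdot\tfrac{2h}{N}$.

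The one genuinely substantial estimate is $\prob_y(\bar\tau_{L_{2h}}<\bar\tau_A)\le C_2h^{-1/2}$, and this is exactly where connectedness of $B$ is used. The set $A=L_0\cup\gamma$ is connected, contains $x$ with $\|x-y\|_1=1$, and contains points of $L_0$ at $\ell^1$-distance $\ge h-1$ from $y$; so $A$ meets the unit ball about $y$ and the complement of the ball of radius $h-2$ about $y$, hence by connectedness it meets every intermediate annulus. Reaching $L_{2h}$ from $y$ forces the walk to exit the ball of radius $h-1$ about $y$, so the discrete planar Beurling projection theorem applies and gives $\prob_y(\bar\tau_{L_{2h}}<\bar\tau_A)\le\prob_y(\text{exit }B(y,h-2)\text{ before }\bar\tau_A)\le C_2h^{-1/2}$. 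Multiplying the three bounds, each of the $\le4$ terms is at most $C_1N\cdot\tfrac{2h}{N}\cdot C_2h^{-1/2}=2C_1C_2h^{1/2}$, whence $\harm_{B,N}(x)\le\harm_{A,N}(x)\le 2C_1C_2\,x_2^{1/2}$. The crux, and the only non-routine part, is the Beurling step: one needs the lattice version of Beurling's inequality for planar simple random walk and must arrange that it is the full connected obstacle $L_0\cup\gamma$ (not the path alone) that screens the annuli around $y$, and that ``hitting the line $L_{2h}$'' is relaxed to ``exiting a disc of radius of order $h$''. The remaining ingredients---monotonicity of the harmonic measure of a fixed vertex under enlarging the aggregate, the last-step/reversibility identity, and the gambler's-ruin visit-count bounds---are standard.
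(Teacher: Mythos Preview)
This statement is not proved in the paper under review; it is quoted in the preliminaries as Theorem~1 of \cite{Procaccia_upper_2019}. Your argument is correct and is essentially the route taken in that source: monotonicity reduces $B$ to $A=L_0\cup\gamma$ for a path $\gamma\ni x$ reaching $L_0$; the last-step decomposition and reversibility convert $\harm_{A,N}(x)$ into $\tfrac14\sum_{y\sim x,\,y\notin A}\mathbf{E}_y[\#\text{ visits to }L_N\text{ before }\bar\tau_A]$; gambler's ruin supplies the factors $2h/N$ (from $L_{2h}$ to $L_N$) and $O(N)$ (expected returns to $L_N$ before $\bar\tau_{L_0}$), which cancel; and the discrete Beurling projection estimate, applied to the connected obstacle $\gamma$ spanning from a neighbour of $y$ down to $L_0$, yields the remaining $h^{-1/2}$.
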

\begin{remark}
It is easy to note that for any $B\subset\BH$ such that $B\cap L_0\not=\emptyset$ and any $x=(x_1,x_2)\in B\setminus L_0$, $\harm_B(x)=\harm_{B\cup L_0}(x)$. Thus one may without loss of generality assume that $L_0\subset B$. 
\end{remark}

\begin{remark}
Since the constant $C$ above does not depend on subset $B$ or point $x$, without loss of generality, one may (incorrectly) assume $C=1$.
\end{remark}

With the upper bounds of the harmonic measure on the upper half plane, a pure growth model called the {\bf interface process} was introduced in \cite{Procaccia_upper_2019} which can be used as a dominating process for both the DLA model in $\BH$ and the stationary DLA model that will be introduced in this paper. Consider an interacting particle system $\bar \xi_t$ defined on $\{0,1\}^\BH$, with 1 standing for an occupied site while 0 for a vacant site, with transition rates as follows: 
\begin{enumerate}[(i)]
\item For each occupied site $x=(x_1,x_2)\in \BH$, if $x_2>0$ it will try to give birth to each of its nearest neighbors at a Poisson rate of $\sqrt{x_2}$. If $x_2=0$, it will try to give birth to each of its nearest neighbors at a Poisson rate of $1$. 
\item If $x$ attempts to give birth to a nearest neighbors $y$ that is already occupied, the birth is suppressed.   
\end{enumerate}
We proved that an interacting particle system determined by the dynamic above is well-defined. 
\begin{Proposition}[Proposition 3, \cite{Procaccia_upper_2019} ]
\label{lemma_well_definition_1}
The interacting particle system $\bar \xi_t\in \{0,1\}^\BH$ satisfying (i) and (ii) is well defined.  
\end{Proposition}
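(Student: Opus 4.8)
The plan is to construct $\bar\xi_t$ from a Harris-type graphical representation and to realize it as the increasing limit of truncated, bounded-rate approximations; the entire difficulty then concentrates in one no-explosion estimate that controls how fast the aggregate can reach a fixed bounded region.

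First I would fix the graphical representation: to every ordered pair $(x,y)$ of nearest neighbors in $\BH$ attach an independent Poisson process $\Pi_{(x,y)}$ of rate $\sqrt{x_2}$ when $x_2\ge 1$ and of rate $1$ when $x_2=0$, and decree that at each arrival time of $\Pi_{(x,y)}$ the site $y$ becomes occupied provided $x$ is occupied at that instant (we take $\bar\xi_0=L_0$, the case needed in the sequel; other natural initial conditions --- a single site, or a connected forest of logarithmic growth --- are handled the same way). For each level $k\ge 1$ let $\bar\xi^{(k)}_t$ be the process driven by the same clocks but with every clock $\Pi_{(x,\cdot)}$, $x_2>k$, suppressed. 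Its transition rates are bounded by $\sqrt k$ and of nearest-neighbor range, hence $\bar\xi^{(k)}$ is a well-defined (Feller) interacting particle system. Since the rule is purely additive and enabling more clocks only creates more occupied sites, both $k\mapsto\bar\xi^{(k)}_t(x)$ and $t\mapsto\bar\xi^{(k)}_t(x)$ are non-decreasing, and I would set $\bar\xi_t(x):=\lim_{k\to\infty}\bar\xi^{(k)}_t(x)$.

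It remains to check that this limit is a legitimate $\{0,1\}^\BH$-valued process, right-continuous with left limits, obeying (i)--(ii). For this it suffices to prove the \emph{no-explosion estimate}: almost surely, for every $t>0$ and every $M\ge 1$, only finitely many clock arrivals in $[0,t]$ are causally relevant to the strip $\Lambda_M:=[-M,M]\times\BZ_{\ge 0}$ --- that is, can be joined to $\Lambda_M$ by a space-time infection path of duration at most $t$. Granting this, for every $k$ above a random finite threshold the truncation at level $k$ touches none of the relevant arrivals, so $\bar\xi_s$ agrees with $\bar\xi^{(k)}_s$ on $\Lambda_M$ for all $s\le t$; hence on $\Lambda_M$ the limit is driven by finitely many clocks, is right-continuous with left limits and has the rates (i)--(ii), and letting $M\to\infty$ finishes the proof. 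To prove the estimate I would dominate the infection time of a site $z$ from $L_0$ by the first-passage quantity $T(z):=\min_{\gamma}\sum_i\eta^\gamma_i$ over nearest-neighbor lattice paths $\gamma$ from $L_0$ to $z$, where, by the lack of memory of the Poisson clocks and the strong Markov property applied along $\gamma$ in order, the increments $\eta^\gamma_i$ are independent exponentials whose rate at a height-$h$ site is $\sqrt h$ (and $1$ at height $0$). A path that climbs to height $H$ and then returns to $\Lambda_M$ must cross every level below $H$ on the way down, and an exponential-moment (Chernoff) estimate shows that, once $H\ge C_0 t^2$, the probability that a given such path is traversed within time $t$ is at most $e^{-c\sqrt H}$; since a lateral excursion of length $d$ at height $H$ costs time of order $d/\sqrt H$, only the $O(M+t\sqrt H)$ columns within lateral distance $O(t\sqrt H)$ of $\Lambda_M$ are relevant at height $H$, so the union bound over those columns stays summable in $H$ and Borel--Cantelli yields an almost surely finite bound, of order $t^2+\log M$, on the largest relevant height; the lateral extent is controlled the same way, and together these make the relevant clock set finite.

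The heart of the matter is exactly this estimate, and it carries a genuine twist: the rates $\sqrt{x_2}$ are unbounded and \emph{increasing} in height, so the interface process is not dominated by any bounded-rate system --- indeed, because $L_0$ is infinitely wide, the infimum over columns of the time to ascend to any fixed height is $0$, so $\bar\xi_t$ already reaches every height almost surely for each $t>0$. What rescues well-definedness, and what has to be handled with some care because the effective weights $\eta^\gamma_i$ above are state-dependent rather than a fixed i.i.d.\ field, is the asymmetry of the two directions: ascending from $L_0$ is cheap, but returning to a \emph{fixed} bounded window is expensive, of order $\sqrt{\text{height}}$ in time. Once the no-explosion estimate is available, the remaining points --- independence of the limit from the truncation scheme, adaptedness, and the fact that $\bar\xi$ solves the martingale problem attached to (i)--(ii) --- are routine. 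A reassuring check on the height bound is that it matches the rates: advancing one level at height $j$ takes time of order $j^{-1/2}$, and $\sum_{j\le H}j^{-1/2}\asymp\sqrt H$, so the height at time $t$ is of order $t^2$.
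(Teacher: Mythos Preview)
Your overall architecture --- graphical representation, monotone limit of height-truncated bounded-rate systems, and reduction to a no-explosion estimate --- is exactly right, and matches how the paper (via the cited reference and via the computations in Section~5 here) handles the problem. The difficulty really does concentrate in the no-explosion bound. But your execution of that bound has a genuine gap.

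You claim that a Chernoff estimate gives, for a \emph{given} path that reaches height $H$ and returns, traversal probability at most $e^{-c\sqrt{H}}$. That is a correct single-path bound (it is essentially the naive exponential-moment estimate with $\theta$ of order~$1$). But then you pass to a union over ``columns'' rather than over paths. This is where the argument breaks: from any fixed site at height $H$ there are still at least $4^{H}$ nearest-neighbor paths of length $\ge H$ back down to $\Lambda_M$, and $4^{H}\cdot e^{-c\sqrt{H}}\to\infty$. A bound on one path is not a bound on the first-passage time, and your geometric heuristic ``only $O(M+t\sqrt{H})$ columns are relevant'' is a typical-case statement, not a tail estimate --- it would itself have to be proved by the very path-counting you are trying to avoid. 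So the union bound as written does not close.

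The fix, and what the paper actually does in Lemmas~\ref{lem_path}--\ref{lem_large_dev}, is to exploit the crucial fact that along any nearest-neighbor path of length $k$ \emph{started from $L_0$}, the height at step $i$ is at most $i$, so the rate at step $i$ is at most $4\sqrt{i+1}$. A large-deviations argument (count the indices $i$ with $T_i\ge c_2/\sqrt{i}$; this is a binomial with success probability $e^{-O(c_2)}$ close to $1$) then gives $\prob(\sum_{i=1}^k T_i<t)\le e^{-\tilde c k}$ with $\tilde c$ \emph{as large as one likes}. This is decisively stronger than your $e^{-c\sqrt{H}}$: it beats the $4^k$ path count and yields $\prob(\|I^x_t\|\ge k)\le e^{-Ck}$ for every $C$, from which Borel--Cantelli over $x\in L_0$ gives finiteness of the influence set meeting any bounded window. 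Your ``order $t^2$ height'' intuition is correct as a scaling guess, but the Chernoff bound you invoke is the wrong one: the point is not that the mean traversal time is $\asymp\sqrt{H}$, but that the single-path deviation probability is $e^{-Ck}$ in the \emph{path length} $k$, not merely $e^{-c\sqrt{H}}$ in the height.
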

Then when the initial aggregation $V_0$ is the origin or finite, we defined the DLA process in $\BH$ starting from $V_0$ (Theorem 5, \cite{Procaccia_upper_2019}), according to the graphic representation (see \cite{ten_lectures} for introduction) of the interface process $\bar \xi_t$ and a procedure of Poisson thinning, see Page 30-31 of \cite{Procaccia_upper_2019} for details. Note that under this construction, the DLA model with finite initial aggregation keeps staying below the interface process.

\section{Coupling construction}

Now in order to prove Theorem \ref{theorem_well_define}, we constructed a sequence of processes $\{A^n_t\}_{n=1}^\infty$, each of which are the DLA in $\BH$ with initial aggregation $V^n_0=[-n,n]\times 0$, coupled together with a same interface process. To be precise, recall the graphic representation in \cite{Procaccia_upper_2019}: 
\begin{itemize}
\item For each $x=(x_1,x_2)$ and $y=(y_1,y_2)\in \BH$ such that $\|x-y\|=1$, we associate the edge $\vec e=x\to y$ with an independent Poisson process $N_t^{x\to y}, t\ge 0$ with intensity $\lambda_{x\to y}=\sqrt{x_2}\vee 1$. 
\item For each $x=(x_1,x_2)$ and $y=(y_1,y_2)\in \BH$ such that $\|x-y\|=1$ let $\{U^{x\to y}_i\}_{i=1}^\infty$ be i.i.d. sequences of $U(0,1)$ random variables independent to each other and to the Poisson processes.  
\end{itemize}
At any time $t$ when there is Poisson transition for edge  $\vec e=x\to y$, we draw the directed edge $(\vec e, t)$ in the phase spcae $\BH\times [0,\infty)$. For any $x\in L_0$ and any fixed time $t$, recall that $I^x_t$ is a subset of all $y$'s in $\BH$ which are connected with $x$ by a path going upwards vertically or following the directed edges. Then in \cite{Procaccia_upper_2019} it has been proved that for all $V_0\subset \BH$, 
$$
\bar \xi^{V_0}_t=\bigcup_{x\in V_0}I^x_t
$$
distributed as the interface process with initial state $V_0$. Moreover, it was proven that for each $t<\infty$ and all $x\in \BH$, $|I^x_t|<\infty$ with probability one, and there can be only a finite number of different paths emanating from $x$ by time $t$, which may only have finite transitions involved. Now for all finite $V_0$, in \cite{Procaccia_upper_2019} we look at the finite set of all the transitions involved in the evolution of $\bar \xi^{V_0}_s, \ s\in [0,t]$, and order them according to the time of occurrence. Then the following thinning was applied in order to define a process $A_t=(V_t, E_t)$ starting at $A_0=(V_0,\emptyset)$: when a new transition arrives at time $t_i$, say it is the $j$th Poisson transition on edge $\vec e=x\to y$. Suppose one already knew $A_{t_i-}:=\lim_{s\uparrow t_i}A_s$.  
\begin{itemize}
\item If $x\notin V_{t_i-}$ or $y\in V_{t_i-}$, nothing happens. 
\item Otherwise:
\begin{itemize}
\item If $U^{x\to y}_j \le \harm_{V_{t_i-}}(\vec e)/\lambda_{\vec e}$, then $V_{t_i}=V_{t_i-}\cup \{y\}$, $E_t=E_{t-}\cup \{\vec e\}$.
\item Otherwise, nothing happens.
\end{itemize}
\end{itemize}
Thus we defined the process $A_t$ up to all time $t$ with $V_t$ identically distributed as our DLA process starting from $A_0$. Now, for each $n$ define $A^n_t$ as the process with $A^n_0=([-n,n]\times 0,\emptyset)$. Then we have coupled all $A^n_t$'s using the same graphic representation and thinning factors. Now in order to prove Theorem \ref{theorem_well_define}, we first show the following theorem which states that for a finite space-times box, the discrepancy probabilities for our $A^n$'s are summable. 

%\note{Here we need to refer to your paper with Noam Berger on stationary Hastings Levitov}\answer{I'm not sure what to write. The ideas are different only same topology. Here we follow discrepancies there we control the change of the function.}

\begin{theorem}
\label{thm_summable}
For any compact subset $K\subset \BH$ and any $T<\infty$, we have 
\beq
\label{eq:summable}
\sum_{n=1}^\infty \prob\left(\exists t\le T, \ s.t. \ A^{n}_t\cap K\not=A^{n+1}_t\cap K \right)<\infty. 
\eeq
\end{theorem}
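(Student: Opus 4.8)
The plan is to track the \emph{discrepancy set} $D_s := V^n_s \triangle V^{n+1}_s$ and to show that, with probability summable in $n$, it never meets $K$ before time $T$. The starting observation is that $D_0=\{(n+1,0),(-n-1,0)\}$ consists of two vertices of $L_0$, and since $\harm_B=\harm_{B\cup L_0}$ these two seeds do not affect \emph{any} harmonic measure; one checks that as long as $D_s\subseteq L_0$ the two processes make identical thinning decisions, so in fact $V^{n+1}_s=V^n_s\cup\{(\pm(n+1),0)\}$ until the random time $\tau_1$ at which one of the seeds first gives birth upward in $A^{n+1}$. Hence it suffices to bound, after $\tau_1$ — when $D^+_s:=D_s\setminus L_0$ consists of a single vertex at height $1$ and horizontal distance $\asymp n$ from $K$ — the probability that $D_s$ reaches $K$ by time $T$. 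On a good event $G_n$ with $\prob(G_n^c)$ summable one may further assume that $A^n,A^{n+1}$ (hence $D_s$) stay inside the slab $\{x_2\le C\log n\}$ and that no directed path of transitions in the graphical representation travels horizontal distance $\tfrac n2$ within time $T$: the first is the interface‑process height bound together with Theorem~\ref{theorem: uniform_path}, and the second a Poisson large‑deviation estimate combined with a count of directed paths of length $\ge\tfrac n2$ in the slab — both in fact super‑polynomially small.

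Next I would isolate the two mechanisms by which $D_s$ grows. \emph{(a) Local growth}: a neighbor of $D_{s-}$ is added to one aggregate but not the other along a directed edge of the graphical representation; on $G_n$ this alone cannot carry the discrepancy from distance $\asymp n$ to $K$. \emph{(b) Harmonic perturbation}: at an edge $\vec e=x\to y$ with $x$ in both aggregates and $y$ in neither, the thinning variable falls between $\harm_{V^n_{s-}}(\vec e)/\lambda_{\vec e}$ and $\harm_{V^{n+1}_{s-}}(\vec e)/\lambda_{\vec e}$, creating a discrepancy at $y$ that may be \emph{anywhere}. The key quantitative input is that, decomposing the defining random walk at its visit to $D^+_{s-}$ and applying the strong Markov property,
\[
\bigl|\harm_{V^n_{s-}}(\vec e)-\harm_{V^{n+1}_{s-}}(\vec e)\bigr|\;\lesssim\;\Bigl(\sum_{u\in D^+_{s-}}\harm_{D^+_{s-}\cup L_0}(u)\Bigr)\cdot\max_{u\in D^+_{s-}}\prob_u\!\bigl(\tau_x<\tau_{L_0}\bigr),
\]
where the ``capacity'' factor is $\lesssim\mathrm{poly}(\log n)\cdot|D^+_{s-}|$ by Theorem~\ref{theorem: uniform_path}, and $\prob_u(\tau_x<\tau_{L_0})\lesssim u_2\,x_2/|u-x|^2$ is the standard hitting estimate for simple random walk killed on $L_0$ (reflection / the half‑plane Green's function). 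Consequently a discrepancy appears at a vertex $w$ at distance $\ell$ from the current discrepancy set at rate $\lesssim\mathrm{poly}(\log n)/\ell^2$, uniformly over targets in the slab.

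Finally I would bound $\prob(\exists\,s\le T:\ D_s\cap K\neq\emptyset)$ by a first moment over ``discrepancy chains'' — time‑ordered sequences of creation events from the height‑$1$ seed near $\pm(n+1)$ to $K$ — estimating each link by the rate above. Reaching $K$ forces total displacement $\asymp n$, so I would organize the chains by the scales $n,n/2,n/4,\dots$ (or $n^{0.9^{\,j}}$) of the distance to $K$: each scale must be traversed, a jump of displacement $\ell$ contributes a factor $\lesssim T\,\mathrm{poly}(\log n)/\ell^2$, and the number of admissible targets at distance $\asymp\ell$ in the height‑$O(\log n)$ slab is only $\lesssim\ell\,\mathrm{poly}(\log n)$ (the slab is quasi–one–dimensional), so every per‑scale sum converges because $2>1$; since there are only $O(\log n)$ scales and the first (length‑$\asymp n$) jump already costs $\lesssim\mathrm{poly}(\log n)/n^2$, the product over scales is at most $n^{o(1)}/n^2$, which is summable, while the purely local chains are even smaller. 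The main obstacle is precisely the control of mechanism (b): a priori the discrepancy set can swell (at polylogarithmic rate) and a perturbation is felt everywhere at once, so the argument must lean on the quadratic decay $1/\ell^2$ of the half‑plane hitting probability, together with the quasi–one–dimensionality of the aggregate near $L_0$, to keep the multi‑scale bookkeeping summable — this is where Theorem~\ref{theorem: uniform_path} and the half‑plane random walk estimates do the real work.
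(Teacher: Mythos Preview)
Your plan shares the paper's core ingredients: the logarithmic height bound for the dominating interface, the decomposition of discrepancy growth into a local mechanism and a harmonic-perturbation mechanism, and the half-plane hitting estimate $\prob_u(\tau_x<\tau_{L_0})\lesssim u_2 x_2/|u-x|^2$. The inequality you write for $|\harm_{V^n}(\vec e)-\harm_{V^{n+1}}(\vec e)|$ is essentially the paper's \eqref{rate_of_discrepancy_2}--\eqref{rate_of_discrepancy_3}, and the paper's Lemma~\ref{lem_hitting} is the box version of your pointwise hitting bound. So conceptually you are on the right track.

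Where the proposal diverges, and where I think it runs into a genuine gap, is the final multi-scale chaining. Two issues. First, the ``capacity factor'' you write is $\lesssim |D^+_{s-}|\cdot\mathrm{poly}(\log n)$, and $|D^+_s|$ is \emph{not} polylogarithmic: the rate identity $\lambda^D\lesssim |E^{D,n}_s|\sqrt{\log n}$ only gives $|E^{D,n}_1|$ of order $e^{C\sqrt{\log n}}$, a sub-polynomial but super-polylog quantity. If such a factor enters at each of $O(\log n)$ dyadic scales, the product $(n^{o(1)})^{\log n}$ is super-polynomial and swamps any polynomial gain from the $1/\ell^2$ decay. Second, ``each scale must be traversed'' does not force a nonlocal jump of that scale: once a single nonlocal jump deposits a discrepancy at distance $d$ from $K$, local growth alone (bounded only by the graphical-representation spread on $G_n$) can carry it across many further dyadic shells with no $1/\ell^2$ cost, so the per-scale factorization is not valid as stated. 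Your chain bookkeeping would have to simultaneously track the size of $D^+$, the number of nonlocal jumps, and their displacements, and it is not clear the first moment closes.

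The paper sidesteps both issues by \emph{(i)} first proving an a priori bound $|E^{D,n}_1|\le n^\alpha$ with stretched-exponential probability (Lemma~\ref{lemma_number_discrepancies}, from the rate bound \eqref{rate_of_discrepancy_4}), which caps the capacity factor once and for all, and \emph{(ii)} replacing the full multi-scale by a single pair of thresholds: a discrepancy is \emph{good} if it lands within $n^{1-5\alpha}$ of the current discrepancy set, \emph{bad} otherwise, and \emph{devastating} if bad and landing inside $[-n^{1-3\alpha},n^{1-3\alpha}]\times[0,\log n]$. With at most $n^\alpha$ discrepancies, ``all good'' forces total spread $\le n^{1-4\alpha}$, so reaching $K$ requires a bad jump; if the first bad jump is not devastating, a \emph{second} bad jump is needed. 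One then only has to bound $\prob(\text{first bad is devastating})\le n^{\alpha}\cdot n^{-1-2.5\alpha}$ and $\prob(\text{two bad jumps})\le (n^\alpha\cdot n^{-1+6\alpha})^2$, both summable for small $\alpha$. This coarse two-threshold scheme, together with the explicit control of $|E^{D,n}_1|$, is what makes the accounting close and is the piece your proposal is missing.
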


\begin{remark}
	We will, without loss of generality in the rest of this paper assume that $T=1$.  
\end{remark}

The proof of Theorem \ref{thm_summable} is immediate once one proves that there exist constants $\alpha>0$ and $C<\infty$ such that for all sufficiently large $n$
\beq
\label{sup_linear_decay}
\prob\left(\exists t\le 1, \ s.t. \ A^{n}_t\cap K\not=A^{n+1}_t\cap K \right)\le \frac{C}{n^{1+\alpha}}. 
\eeq
Note that at $t=0$, the initial aggregations $A^{n}_0$ and $A^{n+1}_0$ are different only by the two end points $(\pm (n+1),0)$. Now we want to control the subset of the discrepancies so that they will not reach $K$ by time $1$. Intuitively, the idea we will follow in the detailed proof in the following sections can be summarized as the follows: 
\begin{enumerate}[(I)]
\item With very high probability none of $A^{n}_1$ and $A^{n+1}_1$ can reach height $\log(n)$. 
\item For any $\alpha>0$, with very high probability the two processes will not have as many as $n^\alpha$ discrepancies by time $1$.
\item For all these discrepancies ever created till time $1$, with very high probability none of them will ever find its way to $K$.  
\end{enumerate}

%\note{the $t$'s above should in fact be 1.}

\section{Logarithm growth of the interface process}

In this section, we prove the logarithm growth upper bound for $A^{n}_t$ and $A^{n+1}_t$ with $t\in [0,1]$. Note that both are contained in the interface process $I^{[-n-1,n+1]\times 0}_t$. Thus it suffices to show that 
\begin{theorem}
\label{thm_log_growth}
for any $C<\infty$, 
$$
\prob\left( I^{[-n,n]\times 0}_1\nsubseteq [-n-\log n, n+\log n]\times[0,\log n] \right)<\frac{1}{n^C}
$$
for all sufficiently large $n$. 
\end{theorem}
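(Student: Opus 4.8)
The plan is to control the growth of $I^{[-n,n]\times 0}_1$ in two directions separately: the vertical direction (height $\le \log n$) and the horizontal direction (spread $\le \log n$ beyond $[-n,n]$), and then union-bound. The key structural fact from the graphic representation is that a site $y$ belongs to $I^{x}_1$ only if there is a directed path from $x$ to $y$ in the phase space $\BH\times[0,1]$ following the drawn Poisson edges (plus upward vertical moves). So every ``excursion'' of the interface to a far-away site must be witnessed by an actual chain of Poisson transitions occurring in $[0,1]$ along a lattice path. Since the Poisson rate on an edge $\vec e = x\to y$ is $\lambda_{\vec e} = \sqrt{x_2}\vee 1$, a path that stays at height $\le h$ fires each of its edges at rate at most $\sqrt h \vee 1 \le \sqrt h + 1$.

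First I would bound the probability of reaching height $m := \lfloor \log n\rfloor$. For the interface to reach height $m$ there must be a directed path from some $x\in[-n,n]\times 0$ up to height $m$; such a path has at least $m$ edges and, because the edges fire in increasing time order within $[0,1]$, the event forces a chain of $m$ (or more) Poisson arrivals, each on an edge of rate $\le \sqrt m + 1$, occurring within a single unit of time. Summing over the $O(n)$ possible starting points and over the (at most $4^k$) lattice paths of length $k\ge m$, and using that the probability of $k$ specified ordered Poisson arrivals in time $1$ on rate-$(\le\sqrt m+1)$ edges is at most $(\sqrt m+1)^k/k!$, I get a bound of the form $n\sum_{k\ge m} 4^k(\sqrt m+1)^k/k! \le n\cdot C^m/m!$ for a constant $C$. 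Since $m=\lfloor\log n\rfloor$, Stirling gives $C^m/m! \le (Ce/m)^m = (Ce/\log n)^{\log n}$, which decays faster than any power $n^{-C'}$ once $n$ is large. (More carefully: $m\log(m/Ce)\ge (C'+1)\log n$ for large $n$, so the whole expression is $\le n^{-C'}$.)

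Next I would bound horizontal spread beyond $[-n-m,\, n+m]$. Conditionally (or on the complementary event) restrict to the scenario where the interface never exceeds height $m$ — on the bad-height event we already have our estimate. A site at horizontal distance $\ge m$ from $[-n,n]$ can only be infected via a directed path that moves horizontally a net $\ge m$ steps while staying at height $\le m$; again this forces a chain of at least $m$ ordered Poisson arrivals in time $1$, each on an edge of rate $\le \sqrt m + 1$. The same path-counting and Poisson-chain estimate as above, now with $O(1)$ relevant starting columns near each endpoint $\pm n$ (by translation invariance the count of bad paths is $O(m)$ rather than $O(n)$, which only helps), yields the same super-polynomial bound $C^m/m! \le n^{-C'}$. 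Union-bounding the vertical-escape event and the two horizontal-escape events (left and right) gives the claimed $\prob(\,\cdot\,) < n^{-C}$ for all large $n$.

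The main obstacle is bookkeeping the combinatorics of the Poisson chains cleanly: one must be careful that ``a directed path of length $k$ is realized in $[0,1]$'' really does cost at most $(\sqrt m+1)^k/k!$ — this uses that $k$ independent Poisson clocks each firing at least once, in a prescribed order, within time $1$ has probability at most $\prod\lambda_i/k!$ (the $k!$ coming from the ordering constraint), together with the crude rate bound $\lambda_i \le \sqrt m + 1$ valid only while we stay at height $\le m$. Handling the interplay between ``stays at height $\le m$'' and ``the rate bound $\sqrt m+1$ applies'' requires a small stopping-time argument: run along the path and stop the first time it would exceed height $m$; before that time all rates are controlled, and exceeding height $m$ is exactly the vertical-escape event already bounded. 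Everything else — the geometric sums $\sum 4^k$, Stirling, choosing $C'$ — is routine.
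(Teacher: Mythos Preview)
Your argument is correct and arrives at the same super-polynomial decay, but the paper organizes the proof differently in two respects, and its route is a little cleaner.

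First, the paper does not split into vertical and horizontal escape. It simply bounds $\prob(\|I^0_1\|\ge k)$ for the single-seed process and then union-bounds over the $2n+1$ seeds in $[-n,n]\times\{0\}$; since any point outside the box is at graph distance $\ge \log n$ from every seed, this handles both directions at once. Second, and more importantly, the paper avoids your height-conditioning/stopping-time device by a sharper rate bound along the path: if $0=x_0,x_1,\dots,x_k$ is a nearest-neighbour path with increasing occupation times, then $x_i$ has height at most $i$, so the total rate of edges into $x_i$ is at most $4\sqrt{i+1}$. This yields $\prob(\|I^0_1\|>k)\le 4^k\,\prob\big(\sum_{i=1}^k T_i<1\big)$ with $T_i\sim\mathrm{Exp}(4\sqrt{i+1})$ independent, and then a binomial large-deviations estimate gives $\prob(\sum T_i<1)\le e^{-\tilde c k}$ for any $\tilde c$. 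The point is that the step-dependent bound $4\sqrt{i+1}$ is valid for \emph{every} path from the origin, with no need to first restrict to height $\le m$.

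Your approach trades that observation for the uniform rate bound $\sqrt m+1$ under the constraint ``height $\le m$'', which then forces the extra stopping-time bookkeeping you flag at the end. It works, and your volume bound $\prob(\sum T_i<1)\le \prod\lambda_i/k!$ (from integrating over the simplex) together with Stirling is a perfectly good substitute for the paper's large-deviations lemma. The one imprecision is the remark that only $O(1)$ or $O(m)$ starting columns matter for horizontal escape: it is simpler (and sufficient) to keep all $O(n)$ seeds, since the per-seed bound already beats any polynomial in $n$.
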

\begin{proof}
First noting that 
$$
I^{[-n,n]\times 0}_1= \bigcup_{x\in [-n,n]\times 0}I^{x}_1,
$$
which, combining with additivity implies it suffices to show that for any $C<\infty$ and all sufficiently large $k$, 
\beq
\label{exp_decay_radius}
\prob\left( \|I^{0}_1\|_2\ge k\right)<\exp(-Ck),
\eeq
where 
$$
 \|A\|_2=\max_{x\in A}\|x\|_2
$$
for all finite $A\subset \BH$. In order to get \eqref{exp_decay_radius}, one first proves 
\begin{lemma}
\label{lem_path}
Let $\{T_i\}_{i=1}^k$ be independent exponential random variables with parameters $\lambda_i=4\sqrt{i+1}$. For any $t>0$, $\prob(\|I^0_1\|_2>k)\le 4^k\prob(\sum_{i=1}^k T_i<t)$.
\end{lemma}
\begin{proof}
%\note{In order to have the stopping times to exist, we need to look at the event $\{\|I^0_1\|_2>k\}$. Corrected}	
	
Under the event $\{\|I^0_1\|_2>k\}$, by definition and the fact that $I^0_1$ is a nearest neighbor growth model, there has to exist a nearest neighbor sequence of points $0=x_0, x_1,\cdots, x_m$ with $\|x_m\|\ge k$ such that for stopping times 
$$
\tau_i=\inf\{s\ge 0 : \ x_i\in I^0_s\}
$$
we have that
$$
0=\tau_0<\tau_1<\cdots<\tau_m<1. 
$$
Noting that $x_0, x_1,\cdots, x_m$ is a nearest neighbor path with $\|x_m\|\ge k$, which implies $m\ge k$, we may without loss of generality assume $m=k$. More precisely, there exists a nearest neighbor sequence of points $0=x_0, x_1,\cdots, x_k$ such that for stopping times 
$$
\tau_i=\inf\{s\ge 0 : \ x_i\in I^0_s\}
$$
we have that
$$
0=\tau_0<\tau_1<\cdots<\tau_k<1.  
$$
Note that there are no more than $4^k$ such different nearest neighbor sequences of points within $\BH$ starting at $0$. And for each given path $0=x_0, x_1,\cdots, x_k$, and each $1\le i\le k$, define 
$$
\Delta_i=\min_{y: \|y-x_i\|=1} \inf\left\{s>0: \ N^{y\to x_i}_{\tau_{i-1}+s}=N^{y\to x_i}_{\tau_{i-1}}+1\right\}.
$$
Then by definition and the strong Markov property, $\Delta_i$ is an exponential random variable with rate $\hat \lambda_i=\sum_{y: \|y-x_i\|=1} \lambda_{y\to x_i}\le 4\sqrt{i+1}$, independent to $\mathcal{F}_{\tau_{i-1}}$. At the same time, note that by definition $\Delta_i\le \tau_i-\tau_{i-1}$, which implies that $\Delta_i\in \mathcal{F}_{\tau_{i}}$, and that $\{\Delta_i\}_{i=1}^k$ is a sequence of independent random variables. Thus
$$
\prob(\tau_0<\tau_1<\cdots<\tau_k<1)\le \prob\left(\sum_{i=1}^k \Delta_i<1 \right)\le  \prob\left(\sum_{i=1}^k T_i<1 \right).
$$
\end{proof}

For some constants $c_1, c_2>0$ (to be chosen later) define the event 
$$G=\left\{\left|\left\{1\le i\le k: T_i\ge \frac{c_2}{\sqrt{i}}\right\}\right|>c_1 k\right\}.$$
%%%%%%%%%%%%%%%%%%%%%%%%%%%%%%%%%%%%%%%%%%%%%%%%%%%%%%%%%%%%%%%%%%%%%%
\begin{lemma}
For any $t>0$ and $k\in\BN$ large enough, $\prob(\sum_{i=1}^k T_i<t)\le \prob(G(t)^c)$.
\end{lemma}
\begin{proof}
Under the event $G$, 
\begin{align}
\sum_{i=1}^k T_i\ge \sum_{i: ~T_i\ge \frac{c_2}{\sqrt{i}}} T_i\ge c_1 k \frac{c_2}{\sqrt{k}}=c_1c_2\sqrt{k}\ge 1,
\end{align}
where the last inequality holds for any sufficiently large $k$.
\end{proof}
%%%%%%%%%%%%%%%%%%%%%%%%%%%%%%%%%%%%%%%%%%%%%%%%%%%%%%%%%%%%%%%%%%%%%%
\begin{lemma}
\label{lem_large_dev}
Let $t>0$ any $\tilde c\in (0,\infty)$, then there exists $c_1,c_2>0$ such that for any sufficiently large $k$,
$$
\prob(G^c)\le\exp(-\tilde c k)
.$$
\end{lemma}
\begin{proof}
Define $X_i=\ind_{\left\{T_i\ge \frac{c_2}{\sqrt{i}}\right\}}$, thus $\sum_{i=1}^k X_i$ is a binomial random variable with parameters $n$ and $p=\prob\left(T_i\ge \frac{c_2}{\sqrt{i}}\right)=e^{-c_2}$, which converges to $1$ when $c_2\rightarrow 0$. By the large deviation principle for the binomial distribution
$$\prob\left(\sum_{i=1}^k T_i< c_1 k\right) \le e^{-I(c_1,p)k}.$$ For $p$ close enough to $1$ we have $I(c_1,p)>\tilde c$ (see \cite{MR2571413} for the exact rate function).
\end{proof}
\noindent {\it Proof of Theorem \ref{thm_log_growth}.} For any $C\in (0,\infty)$, fix a $\tilde c= C+\log(4)+1$. Then Theorem \ref{thm_log_growth} follows from the combination of \eqref{exp_decay_radius} and Lemma \ref{lem_path}-\ref{lem_large_dev}.
\end{proof}

\section{Truncated processes and number of discrepancies}

In the section we complete Step (II) in the outline. But prior to that, we would like to use Theorem \ref{thm_log_growth} to define a truncated version of coupled process $(A^n_t, A^{n+1}_t)$. Define stopping time 
$$
\Gamma=\inf\left\{t\ge 0: \ V^n_t\cup V^{n+1}_t\nsubseteq [-n-\log n, n+\log n]\times[0,\log n] \right\}
$$
be the first time $A^n_t$ or $A^{n+1}_t$ grows outsides the box $[-n-\log n, n+\log n]\times[0,\log n]$. 
\begin{remark}
	It is easy to see that $V^n_t$ or $V^{n+1}_t$ grows outsides our box if and only if $E^n_t$ or $E^{n+1}_t$ does so. 
\end{remark}
Now we can define the {\bf truncated processes} 
$$
(\hat A^n_t, \hat A^{n+1}_t)=\left(A^n_{t\wedge \Gamma}, A^{n+1}_{t\wedge \Gamma}\right).
$$
I.e., we have the coupled processes stopped once either of them goes outsides the box $[-n-\log n, n+\log n]\times[0,\log n]$. By definition, we have 
$$
(A^n_t, A^{n+1}_t)=(\hat A^n_t, \hat A^{n+1}_t)
$$
for all $t\in [0,\Gamma]$. At the same time, note that 
$$
V^n_t\cup V^{n+1}_t\subset \bigcup_{x\in [-n-1,n+1]\times 0}I^{x}_t
$$
for all $t\ge 0$. Thus for all $C<\infty$ and all sufficiently large $n$, 
\beq
\label{eq_truncated_couple}
\begin{aligned}
\prob&\left(A^n_t\equiv \hat A^n_t, A^{n+1}_t\equiv \hat A^{n+1}_t, \ \forall t\in [0,1] \right)\\
&\le \prob\left(I^{[-n-1,n+1]\times 0}_1\nsubseteq [-n-\log n, n+\log n]\times[0,\log n] \right)<\frac{1}{n^C}.
\end{aligned}
\eeq
Thus in order to show Theorem \ref{thm_summable}, it suffices to prove that there exists constants $\alpha>0$ and $C<\infty$ such that for all sufficiently large $n$
\beq
\label{truncate_sup_linear_decay}
\prob\left(\exists t\le1, \ s.t. \ \hat A^{n}_t\cap K\not=\hat A^{n+1}_t\cap K \right)\le \frac{C}{n^{1+\alpha}}. 
\eeq
Now we formally define the set of discrepancies for the coupled process $(\hat A^n_t, \hat A^{n+1}_t)$. For any $t<\infty$, define 
$$
V_t^{D,n}=\left\{x\in\BH, \ s.t.\ \exists s\le t, \ x\in \hat V^n_s\triangle \hat V^{n+1}_s \right\}
$$
as the set of {\bf vertex discrepancies}, and
$$
E_t^{D,n}=\left\{\vec e=x\to y, \ x,y\in \BH, \ s.t.\ \exists s\le t, \ \vec e\in \hat E^n_s\triangle \hat E^{n+1}_s \right\}
$$
as the set of {\bf edge discrepancies}, where $\triangle$ stands for the symmetric difference of sets. From their definition, we list some basic properties of the sets of discrepancies as follows: 
\begin{itemize}
	\item Both $V_t^{D,n}$ and $E_t^{D,n}$ are non-decreasing with respect to time. 
	\item For any $x\in V_t^{D,n}$, there has to be an edge $\vec e_x\in E_t^{D,n}$ ending at $x$.  
	\item For any $\vec e= a\to x\in E_t^{D,n}$, $x$ has to be in $x\in V_t^{D,n}$. 
	\item Whenever a new vertex is added in $V_t^{D,n}$, there has to be a new edge added to $E_t^{D,n}$. However, when a new edge is added to $E_t^{D,n}$, there may or may not be a  a new vertex added in $V_t^{D,n}$.
\end{itemize}
From the observations above, it is immediate to see that $V_t^{D,n}$ is the same as the collection of all ending points in $E_t^{D,n}$, which also implies that $|V_t^{D,n}|\le |E_t^{D,n}|$.

Moreover, for the event of interest, we have 
\beq
\label{discrepancy_event}
\left\{\exists t\le1, \ s.t. \ \hat A^{n}_t\cap K\not=\hat A^{n+1}_t\cap K \right\}=\left\{V_1^{D,n}\cap K\not=\emptyset\right\}.
\eeq
As we outlines in the previous section, in order to prove the event in \eqref{discrepancy_event} has a super-linearly decaying probability as $n\to\infty$, we first control the growth of $|E^{D,n}_t|$. I.e., by time 1 there cannot be too many discrepancies created in the coupled system. To be precise, we prove that 
\begin{lemma}
	\label{lemma_number_discrepancies}
	For any $\alpha>0$, there is a $c>0$ such that 
	$$
	\prob\left(|E^{D,n}_1|\ge n^\alpha \right)\le \exp(-n^c)
	$$
	for all sufficiently large $n$. 
\end{lemma}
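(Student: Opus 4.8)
The plan is to control the discrepancy count by comparison with a linear pure–birth (Yule) process. We work throughout with the truncated pair $(\hat A^n_t,\hat A^{n+1}_t)$, so that both aggregates stay inside $[-n-\log n,n+\log n]\times[0,\log n]$; hence every Poisson clock relevant to the truncated evolution has rate $\lambda_{\vec e}=\sqrt{x_2}\vee1\le 2\sqrt{\log n}$ for large $n$, and every occupied point has height at most $\log n+1$. Put $M_t:=|E^{D,n}_t|+2$, so $M_0=2$ and $M_t$ is nondecreasing. I will show that, conditionally on $\mathcal F_{t-}$, the rate at which $M_t$ jumps is at most $C\sqrt{\log n}\,M_{t-}$ at every time $t$; by the standard coupling for counting processes this makes $M_1$ stochastically dominated by $Z_1$, where $Z$ is a Yule process with per–particle rate $\rho=C\sqrt{\log n}$ and $Z_0=2$, and the lemma follows from the tail of $Z_1$.

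For the rate estimate, observe that a new edge discrepancy at $\vec e=x\to y$ can only arise from a Poisson ring on $\vec e$ whose thinning variable $U^{x\to y}_j$ makes the transition accepted in exactly one of the two processes. Split such events into \emph{local} ones, where $x$ or $y$ already lies in $\hat V^n_{t-}\triangle\hat V^{n+1}_{t-}$, and \emph{non-local} ones, where neither does — so $x\in\hat V^n_{t-}\cap\hat V^{n+1}_{t-}$, $y\notin\hat V^n_{t-}\cup\hat V^{n+1}_{t-}$, and $U^{x\to y}_j$ must land between $\harm_{\hat V^n_{t-}}(\vec e)/\lambda_{\vec e}$ and $\harm_{\hat V^{n+1}_{t-}}(\vec e)/\lambda_{\vec e}$. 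The local rate is at most
\[
\sum_{v\in\hat V^n_{t-}\triangle\hat V^{n+1}_{t-}}\big(\harm_{\hat V^n_{t-}}(v)+\hat\harm_{\hat V^n_{t-}}(v)+\harm_{\hat V^{n+1}_{t-}}(v)+\hat\harm_{\hat V^{n+1}_{t-}}(v)\big),
\]
and, using Theorem \ref{theorem: uniform_path} on the connected sets $\hat V^n_{t-}\cup L_0$, $\hat V^{n+1}_{t-}\cup L_0$ (with $\hat\harm_B(v)\le\sum_{x\sim v}\harm_B(x)$, and noting that the at most two discrepancy points on $L_0$ contribute only $O(1)$), every summand is $O(\sqrt{\log n})$; so the local rate is $\le C\sqrt{\log n}\,|\hat V^n_{t-}\triangle\hat V^{n+1}_{t-}|$. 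The non-local rate is at most $\sum_{\vec e}\big|\harm_{\hat V^n_{t-}}(\vec e)-\harm_{\hat V^{n+1}_{t-}}(\vec e)\big|$, and the key estimate is
\[
\sum_{\vec e}\big|\harm_{\hat V^n_{t-}}(\vec e)-\harm_{\hat V^{n+1}_{t-}}(\vec e)\big|\ \le\ C\sqrt{\log n}\;\big|\hat V^n_{t-}\triangle\hat V^{n+1}_{t-}\big|.
\]

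To prove the key estimate I would compare a simple random walk from $z\in L_N$ against the two sets $\hat V^n_{t-}\cup L_0$ and $\hat V^{n+1}_{t-}\cup L_0$: these agree off the symmetric difference $D:=\hat V^n_{t-}\triangle\hat V^{n+1}_{t-}$, so the walk contributes differently to $\harm_{\hat V^n_{t-}}(\vec e)$ and $\harm_{\hat V^{n+1}_{t-}}(\vec e)$ only if it visits $D$ strictly before its first visit to $C_0:=(\hat V^n_{t-}\cap\hat V^{n+1}_{t-})\cup L_0$. Summing over the edges $\vec e$ on the boundary of both aggregates (the events are disjoint, being distinguished by the last two steps of the walk) and letting $N\to\infty$, the left side is bounded by the total mass of walks from $L_N$ that hit $D$ before $C_0$; and since $\big(\hat V^n_{t-}\setminus\hat V^{n+1}_{t-}\big)\cup C_0=\hat V^n_{t-}\cup L_0$ (and symmetrically), this mass equals $\sum_{w\in\hat V^n_{t-}\setminus\hat V^{n+1}_{t-}}\harm_{\hat V^n_{t-}}(w)+\sum_{w\in\hat V^{n+1}_{t-}\setminus\hat V^n_{t-}}\harm_{\hat V^{n+1}_{t-}}(w)$ — i.e.\ the discrepancy is controlled by the stationary harmonic measure of the difference set taken \emph{with respect to the full, connected aggregates}, exactly the quantity Theorem \ref{theorem: uniform_path} bounds by $O(\sqrt{\log n})$ per point. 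Combining the two bounds, and using $D\subseteq V^{D,n}_{t-}$ with $|V^{D,n}_{t-}|\le|E^{D,n}_{t-}|+2=M_{t-}$, the jump rate of $M_t$ is $\le C\sqrt{\log n}\,M_{t-}$.

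Finally, $\prob(|E^{D,n}_1|\ge n^\alpha)=\prob(M_1\ge n^\alpha+2)\le\prob(Z_1\ge n^\alpha)$, and writing $Z_1$ as a sum of two independent geometric variables with success probability $e^{-\rho}$ gives $\prob(Z_1\ge n^\alpha)\le 2\exp\!\big(-\tfrac14 e^{-\rho}n^\alpha\big)$; since $e^{-\rho}n^\alpha=n^{\alpha-C/\sqrt{\log n}}$, which exceeds $n^{\alpha-\varepsilon}$ for every fixed $\varepsilon>0$ once $n$ is large, this is $\le\exp(-n^{c})$ for large $n$ and any fixed $c<\alpha$, proving the lemma. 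The step I expect to be the main obstacle is the key estimate on $\sum_{\vec e}\big|\harm_{\hat V^n_{t-}}(\vec e)-\harm_{\hat V^{n+1}_{t-}}(\vec e)\big|$: one must set up precisely the event that a random walk ``feels'' the symmetric difference and, crucially, reduce it to the harmonic measure of the difference set \emph{with respect to the full aggregates} (so that Theorem \ref{theorem: uniform_path} applies) rather than with respect to the difference set itself, which need not be connected and carries no such bound. The local term and the passage to the Yule comparison are routine.
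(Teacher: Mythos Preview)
Your proof is correct and follows essentially the same strategy as the paper: bound the rate of new edge discrepancies by $C\sqrt{\log n}$ times the current count via a local/non-local split (the non-local part controlled, via the same random-walk decomposition, by the stationary harmonic measure of the symmetric difference with respect to a connected aggregate, so that Theorem~\ref{theorem: uniform_path} applies), then dominate by a linear pure-birth process and read off the tail. Your Yule-process formulation with the explicit geometric tail replaces the paper's separate large-deviation lemma on $\sum_i\sigma_i$, and your offset $M_t=|E^{D,n}_t|+2$ correctly accounts for the two initial vertex discrepancies at $(\pm(n+1),0)$, a point the paper's inequality $|V^{D,n}_t|\le|E^{D,n}_t|$ glosses over at $t=0$; one small slip is that in your non-local bound the word ``equals'' should be ``is bounded by'' (your sum uses the smaller stopping sets $\hat V^n_{t-}$ and $\hat V^{n+1}_{t-}$ rather than their union, which only makes the bound looser).
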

\begin{proof}
Note that $|E^{D,n}_0|=0$. For $i=1,2,\cdots$, define stopping time $\Delta_i=\inf\{t\ge 0, \ |E^{D,n}_t|=i\}$, with the convention $\inf\emptyset=\infty$. Given the configuration of $(\hat A^{n}_t,\hat A^{n+1}_t)$, we first discuss the rate at which a new discrepancy is created. If $t\le \Gamma$, by definition such rate equals to 0. Otherwise, for each edge $\vec e= x\to y$ in $\BH$, it can be classified according to the configuration as follows: define indicator matrix
$$
\BI(\hat A^{n}_t,\hat A^{n+1}_t)(\vec e)=\left(
\begin{array}{lll}
\ind_{x\in \hat V^{n}_t} & \ind_{y\in \hat V^{n}_t} & \ind_{\vec e\in \hat E^{n}_t}\\
\ind_{x\in \hat V^{n+1}_t} & \ind_{y\in \hat V^{n+1}_t} & \ind_{\vec e\in \hat E^{n+1}_t}
\end{array}
\right).
$$
Then by definition, the only edges that contribute to the increasing rate of $E_t^{D,n}$ are those with indicator matrices as one of the followings:
\begin{align*}
&\BI_1=
\left(
\begin{array}{lll}
1 & 0 & 0\\
1 & 0 & 0
\end{array}
\right), \
\BI_2=
\left(
\begin{array}{lll}
1 & 1 & 0\\
1 & 0 & 0
\end{array}
\right),\\
&\BI_3=
\left(
\begin{array}{lll}
1 & 0 & 0\\
0 & 0 & 0
\end{array}
\right), \
\BI_4=
\left(
\begin{array}{lll}
1 & 0 & 0\\
0 & 1 & 0
\end{array}
\right),\\
&\BI_5=
\left(
\begin{array}{lll}
1 & 0 & 0\\
1 & 1 & 0
\end{array}
\right), \
\BI_6=
\left(
\begin{array}{lll}
0 & 0 & 0\\
1 & 0 & 0
\end{array}
\right),\\
&\BI_7=
\left(
\begin{array}{lll}
0 & 1 & 0\\
1 & 0 & 0
\end{array}
\right)
\end{align*}
and we will denote the collections of such edges $E_1,E_2,\cdots, E_7$. 

Now the rate that a new edge is added to $E_t^{D,n}$ can be written as the follows:
\beq
\label{rate_of_discrepancy}
\begin{aligned}
\lambda^D(\hat A^{n}_t,\hat A^{n+1}_t)&=\sum_{\vec e\in E_1} \left|\harm_{\hat V^{n}_t}(\vec e)-\harm_{\hat V^{n+1}_t}(\vec e)\right|\\
&+\sum_{\vec e\in E_2}\harm_{\hat V^{n+1}_t}(\vec e)+\sum_{\vec e\in E_3}\harm_{\hat V^{n}_t}(\vec e)+\sum_{\vec e\in E_4}\harm_{\hat V^{n}_t}(\vec e)\\
&+\sum_{\vec e\in E_5}\harm_{\hat V^{n}_t}(\vec e)+\sum_{\vec e\in E_6}\harm_{\hat V^{n+1}_t}(\vec e)+\sum_{\vec e\in E_7}\harm_{\hat V^{n+1}_t}(\vec e). 
\end{aligned}
\eeq
For any $\vec e\in \cup_{i=2}^7 E_i$, note that at least one end point of $\vec e$ has to be within $\hat V^{n}_t\triangle \hat V^{n+1}_t\subset V_t^{D,n}$. Moreover, recall that for each point in $\BH$, there can be no more than 4 directed edges emanating from it and 4 edges going towards it. Thus, $| \cup_{i=2}^7 E_i|\le 8|V_t^{D,n}|\le 8|E_t^{D,n}|$. Now recalling $t<\Gamma$, $\hat A^{n}_t\cup \hat A^{n+1}_t\subset  [-n-\log n, n+\log n]\times[0,\log n]$, which implies that for each $\vec e\in \cup_{i=2}^7 E_i$, the corresponding harmonic measure in \eqref{rate_of_discrepancy} is bounded from above by $2\sqrt{\log n}$. Thus
\beq
\label{rate_of_discrepancy_1}
\begin{aligned}
&\sum_{\vec e\in E_2}\harm_{\hat V^{n+1}_t}(\vec e)+\sum_{\vec e\in E_3}\harm_{\hat V^{n}_t}(\vec e)+\sum_{\vec e\in E_4}\harm_{\hat V^{n}_t}(\vec e)\\
+&\sum_{\vec e\in E_5}\harm_{\hat V^{n}_t}(\vec e)+\sum_{\vec e\in E_6}\harm_{\hat V^{n+1}_t}(\vec e)+\sum_{\vec e\in E_7}\harm_{\hat V^{n+1}_t}(\vec e)\le  16|E_t^{D,n}|\sqrt{\log n}. 
\end{aligned}
\eeq
Now for each $\vec e=x\to y\in E_1$, by definition $x$ has to be in the inner boundary of $\hat V^{n}_t\cap \hat V^{n+1}_t$, while $y$ is in the complement of $\hat V^{n}_t\cup \hat V^{n+1}_t$. Moreover, we have 
\beq
\label{rate_of_discrepancy_2}
\left|\harm_{\hat V^{n}_t}(\vec e)-\harm_{\hat V^{n+1}_t}(\vec e)\right|\le \harm_{\hat V^{n}_t\cap \hat V^{n+1}_t}(\vec e)-\harm_{\hat V^{n}_t\cup \hat V^{n+1}_t}(\vec e).
\eeq
Using a similar method as in Section 5 of \cite{Procaccia_upper_2019} and recalling the definition of stationary harmonic measure, 
%\scriptsize
\begin{align*}
&\harm_{\hat V^{n}_t\cap \hat V^{n+1}_t}(\vec e)-\harm_{\hat V^{n}_t\cup \hat V^{n+1}_t}(\vec e)\\
=&\lim_{N\to\infty} \left(\harm_{\hat V^{n}_t\cap \hat V^{n+1}_t,N}(\vec e)-\harm_{\hat V^{n}_t\cup \hat V^{n+1}_t,N}(\vec e)\right)\\
=&\lim_{N\to\infty} \sum_{w\in L_N} \prob_w\left(X_{\tau_{(\hat V^{n}_t\cap \hat V^{n+1}_t)\cup L_0}}=x, \ X_{\tau_{(\hat V^{n}_t\cap \hat V^{n+1}_t)\cup L_0}-1}=y\right)\\
- &\lim_{N\to\infty} \sum_{w\in L_N}\prob_w\left(X_{\tau_{(\hat V^{n}_t\cup \hat V^{n+1}_t)\cup L_0}}=x, \ X_{\tau_{(\hat V^{n}_t\cup \hat V^{n+1}_t)\cup L_0}-1}=y\right)\\
=&\lim_{N\to\infty} \sum_{w\in L_N}\prob_w\left(X_{\tau_{(\hat V^{n}_t\cap \hat V^{n+1}_t)\cup L_0}}=x, \ X_{\tau_{(\hat V^{n}_t\cap \hat V^{n+1}_t)\cup L_0}-1}=y, \ X_{\tau_{(\hat V^{n}_t\cup \hat V^{n+1}_t)\cup L_0}}\in \hat V^{n}_t\triangle \hat V^{n+1}_t\right)\\
=& \lim_{N\to\infty} \sum_{w\in L_N}\sum_{z\in \hat V^{n}_t\triangle \hat V^{n+1}_t}\prob_w\left(X_{\tau_{(\hat V^{n}_t\cup \hat V^{n+1}_t)\cup L_0}}= z\right)\prob_z\left(X_{\tau_{(\hat V^{n}_t\cap \hat V^{n+1}_t)\cup L_0}}=x, \ X_{\tau_{(\hat V^{n}_t\cap \hat V^{n+1}_t)\cup L_0}-1}=y\right).
\end{align*}
Taking the summation over all $\vec e\in E_1$, and note that for all $z\in \hat V^{n}_t\triangle \hat V^{n+1}_t$, 
$$
\sum_{\vec e=x\to y\in E_1} \prob_z\left(X_{\tau_{(\hat V^{n}_t\cap \hat V^{n+1}_t)\cup L_0}}=x, \ X_{\tau_{(\hat V^{n}_t\cap \hat V^{n+1}_t)\cup L_0}-1}=y\right)\le 1
$$
since the summation above are over disjoint events. We have
$$
\sum_{\vec e\in E_1}\harm_{\hat V^{n}_t\cap \hat V^{n+1}_t}(\vec e)-\harm_{\hat V^{n}_t\cup \hat V^{n+1}_t}(\vec e)\le \harm_{\hat V^{n}_t\cup \hat V^{n+1}_t}(\hat V^{n}_t\triangle \hat V^{n+1}_t).
$$
Moreover, noting that by definition $\hat V^{n}_t\cup \hat V^{n+1}_t$ is connected in $\BH$, and that 
$$
|\hat V^{n}_t\triangle \hat V^{n+1}_t|\le |V^{D,n}_t|\le |E_t^{D,n}|,
$$
one may, by Theorem \ref{theorem: uniform_path} have,
\beq
\label{rate_of_discrepancy_3}
\sum_{\vec e\in E_1}\harm_{\hat V^{n}_t\cap \hat V^{n+1}_t}(\vec e)-\harm_{\hat V^{n}_t\cup \hat V^{n+1}_t}(\vec e)\le |E_t^{D,n}| \sqrt{\log n}. 
\eeq
Now combining \eqref{rate_of_discrepancy_1}-\eqref{rate_of_discrepancy_3} and plugging them back to \eqref{rate_of_discrepancy} gives us 
\beq
\label{rate_of_discrepancy_4}
\begin{aligned}
	\lambda^D(\hat A^{n}_t,\hat A^{n+1}_t)\le 17|E_t^{D,n}| \sqrt{\log n}
\end{aligned}
\eeq
Then recalling the definition of $\Delta_i$, by Poisson thinning and strong Markov property again we have 
$$
\prob\left(|E^{D,n}_1|\ge n^\alpha \right)=P\left(\sum_{i=1}^{n^\alpha}\Delta_i\le 1\right)\le P\left(\sum_{i=1}^{n^\alpha}\sigma_i\le 1\right)
$$
where $\{\sigma_i\}_{i=1}^{n^\alpha}$ is an independent sequence of exponential random variables with $\tilde \lambda_i=17i\sqrt{\log n}$.  

Thus, in order to prove Lemma \ref{lemma_number_discrepancies}, it suffices to prove the following result:

\begin{lemma}
	
	Let $\sigma_i$ be defined as above. Then for all $\alpha<1$, $\beta<\alpha$ and any  $c_3>0$, for all $n$ large enough
	
	$$
	\prob\left(\sum_{i=1}^{n^\alpha}\sigma_i<1\right)\le e^{-c_3 n^\beta}
	$$
	
\end{lemma}

\begin{proof}
	
	For $\beta<\alpha$ defined in the lemma and some constants $c_1,c_2>0$ (to be chosen later) define the events for $j\in[1, n^\alpha/n^\beta]\cap\BN$,
	
	$$G_j=\left\{\left|\left\{(j-1)n^\beta \le i\le j n^\beta: \sigma_i\ge \frac{c_2}{i\sqrt{\epsilon \log n}}\right\}\right|>c_1 n^\beta\right\}.$$

	Define $N_i=\ind_{\left\{\sigma_i\ge \frac{c_2}{i\sqrt{\epsilon \log n}}\right\}}$, thus $M_j=\sum_{i=(j-1)n^\beta}^{jn^\beta} N_i$ is a binomial random variable with parameters $n^\beta$ and $p=\prob\left(\sigma_i\ge \frac{c_2}{i\sqrt{\epsilon \log n}}\right)=e^{-c_2}$, which converges to $1$ when $c_2\rightarrow 0$. By the large deviation principle for binomial for binomial random variable 
	
	$$\prob(G_j^c)=\prob\left(M_j\le c_1 n^\beta\right) \le e^{-I(c_1,p)n^\beta}\le e^{-c_3 n^\beta},$$ where the last inequality follows by taking $p$ close enough to $1$ such that $I(c_1,p)>c'_3$ (see \cite{MR2571413} for the exact rate function). Since $c'_3$ was arbitrary, for a slightly smaller $c_3$ we can obtain for large enough $n$
	
	$$
	\prob\left( \bigcup_{j\in[1,\ldots, n^\alpha/n^\beta]\cap\BN}G_j\right)\le n^{\alpha-\beta}e^{-c'_3 n^\beta}\le e^{-c_3 n^\beta}
	.$$
	
	But under the event $\left\{ \bigcup_{j\in[1,\ldots, n^\alpha/n^\beta]\cap\BN}G_j\right\}^c$

	\begin{align*}
	&\sum_{i=1}^{n^\alpha}\sigma_i=\sum_{j=1}^{n^{\alpha-\beta}}\sum_{(j-1)n^\beta}^{j n^\beta}\sigma_i\ge \frac{c_2}{\sqrt{\epsilon\log n}}\left(\frac{c_1 n^\beta}{n^\beta}+\frac{c_1n^\beta}{2n^\beta}+\cdots +\frac{c_1 n^\beta}{n^{\alpha-\beta} n^\beta}\right)\\
	&> \frac{1}{2\epsilon} c_1c_2(\alpha-\beta)\sqrt{\log n}>1
	,\end{align*}
	
	where the last two inequalities require taking a large enough $n$.\end{proof}
Thus the proof of Lemma \ref{lemma_number_discrepancies} completes. \end{proof}

\section{Locations of discrepancies and proof of Theorem \ref{thm_summable}}

In the previous section, we have shown that, for any $\alpha>0$, by time $1$ with stretch exponentially high probability, there will be no more than $n^\alpha$ discrepancies. Now we show that it is highly unlikely that the first $n^\alpha$ possible discrepancies may ever reach our finite subset $K$. 

To show this, note that now the truncated model $(\hat A^n_t, \hat A^{n+1}_t)$ forms a finite state Markov process. In this section, it is more convenient to concentrate on the {\bf embedded chain}
$$
(\hat A^n_k, \hat A^{n+1}_k), \ k=0,1,2,\cdots 
$$ 
where all configuration $(\hat A^n_k, \hat A^{n+1}_k)$ with 
$$
\hat V^n_k\cup\hat V^{n+1}_k\nsubseteq [-n-\log n, n+\log n]\times[0,\log n]
$$
are absorbing states. 
\begin{remark}
	Without causing further confusion, we will, in this section use the parallel notations such as $(\hat A^n_k, \hat A^{n+1}_k)$, $V^{D,n}_k$ and $E_k^{D,n}$ etc., for the embedded chain without more specification. 
\end{remark}
Thus, in order to show Step (III), we only need to prove the lemma as follows:
\begin{lemma}
	\label{lemma_discrepancy_loc}
	There exists an $\alpha>0$ whose value will be specified later such that for any compact $K\subset \BH$, 
	$$
	\prob\left(E_{\Delta_{n^\alpha}}^{D,n}\cap K \not=\emptyset\right)\le n^{-1-\alpha}
	$$
	for all sufficiently large $n$. 
\end{lemma}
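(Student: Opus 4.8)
The plan is to show that within the first $n^\alpha$ discrepancy events a discrepancy can enter the fixed compact set $K$ only by way of a \emph{long-range} transition — one of type $E_1$ whose new discrepancy vertex falls near $K$ — and that such transitions have vanishingly small probability. The deterministic input is the classification $\BI_1,\dots,\BI_7$: every transition of type $E_2,\dots,E_7$ creates its new discrepancy edge/vertex within one lattice step of $V^{D,n}_\cdot\cup E^{D,n}_\cdot$, so along any chain of such transitions the discrepancy set advances by at most one step at a time. Since $V^{D,n}_0=\{(\pm(n+1),0)\}$ lies at distance of order $n$ from $K$ while the embedded chain performs at most $n^\alpha$ discrepancy steps, on the event $\{E^{D,n}_{\Delta_{n^\alpha}}\cap K\neq\emptyset\}$ one of the first $n^\alpha$ discrepancy events must be an $E_1$ transition that produces a discrepancy within distance $n^\alpha$ of $K$: following the ancestry of a discrepancy sitting in $K$ back through its $E_2$–$E_7$ parents, the chain has length $\le n^\alpha$ and cannot reach the initial discrepancies, so it terminates at an $E_1$ event within $n^\alpha$ of $K$. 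To make the rate estimate below usable one further runs a dyadic induction over the distance to $K$: with $A_R$ the event that some discrepancy comes within $R$ of $K$ among the first $n^\alpha$ steps, $A_R\subseteq \mathcal J_R\cup A_{4R}$, where $\mathcal J_R$ is the event of an $E_1$ transition into the $R$-neighbourhood of $K$ occurring while the entire discrepancy set is still at distance $>4R$ from $K$; iterating replaces the claim by a bound on $\sum_k\prob(\mathcal J_{R_k})$ over $O(\log n)$ scales $R_k$, plus the two exceptional events handled by Lemma~\ref{lemma_number_discrepancies} (too many discrepancies) and Theorem~\ref{thm_log_growth} (leaving the box).

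On $\mathcal J_R$ I would bound the instantaneous rate of the offending $E_1$ transition by the same last-exit decomposition as in the proof of Lemma~\ref{lemma_number_discrepancies}: for $\vec e=x\to y\in E_1$ one has $|\harm_{\hat V^n_t}(\vec e)-\harm_{\hat V^{n+1}_t}(\vec e)|\le \harm_{\hat V^n_t\cap\hat V^{n+1}_t}(\vec e)-\harm_{\hat V^n_t\cup\hat V^{n+1}_t}(\vec e)$, and summing the right-hand side over those $\vec e$ with $y$ in the $R$-neighbourhood of $K$ and decomposing the walk from $L_N$ at its first visit to the discrepancy set yields
\[
\Bigl(\sum_{z\in\hat V^n_t\triangle\hat V^{n+1}_t}\harm_{\hat V^n_t\cup\hat V^{n+1}_t}(z)\Bigr)\cdot\max_{z}\ \prob_z\bigl(\text{hit the }R\text{-neighbourhood of }K\text{ before }L_0\cup(\hat V^n_t\cap\hat V^{n+1}_t)\bigr).
\]
The first factor is at most $C|E^{D,n}_t|\sqrt{\log n}\le Cn^\alpha\sqrt{\log n}$ by Theorem~\ref{theorem: uniform_path} and the height bound $V^n_t\cup V^{n+1}_t\subseteq[-n-\log n,n+\log n]\times[0,\log n]$ from Theorem~\ref{thm_log_growth}. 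For the second factor, on $\mathcal J_R$ the point $z$ has height $\le\log n$ and lies at distance $>4R$ from $K$, so the $\asymp|z-a|^{-2}$ decay of the upper-half-plane hitting density (the walk is killed on $L_0$) together with the compactness of $K$ — so the relevant target consists of $O(R\log n)$ boundary sites — makes this probability $\lesssim R\cdot\mathrm{poly}(\log n)/R^2=\mathrm{poly}(\log n)/R$. Converting this continuous-time rate bound into a bound on the number of such steps before $\Delta_{n^\alpha}$, using $\e\int_0^{\Delta_{n^\alpha}}\lambda^D(\hat A^n_s,\hat A^{n+1}_s)\,ds=n^\alpha$ and a lower bound $\lambda^D\ge\Lambda_1$ on the total discrepancy rate (the total $E_1$-rate, which dominates the flux above up to logarithmic factors), gives a bound for $\prob(\mathcal J_R)$ and hence, after summing over scales and choosing $\alpha$ small, the lemma.

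The main obstacle is making the last two steps quantitatively tight: the crude combination of the $n^\alpha$ bound on the number of discrepancies, the $\sqrt{\log n}$ bound on each discrepancy's harmonic measure, and the $|z-a|^{-2}$ hitting estimate controls the fine scales comfortably but at the coarsest scale $R\asymp n$ only yields order $n^{\alpha-1}\mathrm{poly}(\log n)$, which is not $n^{-1-\alpha}$. Closing this gap is where the real work lies: one must use in coordination that the number of discrepancies produced is in fact sub-polynomial (bounded by $e^{O(\sqrt{\log n})}$ via the birth-rate estimate $\lambda^D\le 17|E^{D,n}_t|\sqrt{\log n}$), that the discrepancies stay confined to within that many steps of the two original endpoints so the traversal distances are genuinely of order $n$ and the effective $E_1$-target is essentially bounded rather than of size $n$ (only sub-polynomially many local spreading steps being available after the final jump), and one must run the multi-scale recursion so that the scale-dependent exponents balance without circularity. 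Once these are in place, taking $\alpha$ sufficiently small (so that the exponent $1+\alpha$ is beaten with room for the logarithmic corrections) finishes the proof, and combining with Lemma~\ref{lemma_number_discrepancies} and Theorem~\ref{thm_log_growth} gives the stated summable bound \eqref{truncate_sup_linear_decay}.
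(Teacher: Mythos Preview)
Your intuition is sound — the $E_2$--$E_7$ transitions are local, only $E_1$ can produce a long-range jump, and the relevant $E_1$ rate factors as (harmonic measure of the discrepancy set) $\times$ (hitting probability of the target). But you correctly identify that your scheme only gives $n^{\alpha-1}\mathrm{poly}(\log n)$ at the coarsest scale, and the fixes you sketch (sub-polynomial number of discrepancies, multi-scale recursion) do not obviously close this gap; that last paragraph is a wish list rather than an argument.

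The missing device is an \emph{exact cancellation in the ratio}, not a logarithmic comparison. Work with the embedded chain and bound, for each $k\le n^\alpha$, the conditional probability that the $k$th discrepancy is ``devastating'' (lands in a central box of width $n^{1-3\alpha}$) \emph{given that there is a $k$th discrepancy}. The numerator of this ratio is, by your own last-exit decomposition, at most
\[
\harm_{\bar V_0\cup\tilde V_0}(\bar V_0\triangle\tilde V_0)\cdot\sup_{z\in\bar V_0\triangle\tilde V_0}\prob_z\bigl(\tau_{Box}<\tau_{L_0}\bigr).
\]
For the denominator, the right lower bound on $\lambda^D$ comes not from the $E_1$ block (as you write) but from $E_3\cup E_6$: every vertex $z\in\bar V_0\triangle\tilde V_0$ has an edge $z\to w$ with $w\notin\bar V_0\cup\tilde V_0$, and such edges lie in $E_3\cup E_6$, giving
\[
\lambda^D(\bar A_0,\tilde A_0)\ \ge\ \harm_{\bar V_0\cup\tilde V_0}(\bar V_0\triangle\tilde V_0).
\]
The harmonic-measure factor cancels \emph{exactly}, leaving only the hitting probability $\sup_{z}\prob_z(\tau_{Box}<\tau_{L_0})$, which by the half-plane estimates you quote is $\le n^{-1-2.5\alpha}$ (the target has width $n^{1-3\alpha}$, the traversal from $Box_0$ to $Box$ costs $n^{-2+o(1)}$ per point). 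Summing over $k\le n^\alpha$ gives $n^{-1-1.5\alpha}$ for this event.

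This also shows your dyadic induction is more than is needed: two scales suffice. Either the \emph{first} long jump (the first $\vec e_k$ at distance $>n^{1-5\alpha}$ from $E^{D,n}_{\Delta_{k-1}}$) already lands in the central box — event $A$, handled above — or it does not, in which case a \emph{second} long jump is required before the discrepancies can reach $K$; the same ratio argument bounds the probability of each long jump by $n^{-1+6\alpha}$, so two of them cost $n^{-2+14\alpha}$ (times $n^{2\alpha}$ for the choice of indices). Taking $\alpha=1/16$ makes both bounds $\le n^{-1-\alpha}$.
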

%\note{Here the value of $\alpha$(1/16) is actually independent to the choice of $K$}\answer{This how the Lemma is stated right?}

\begin{proof}
Now we recall the stopping times for the creation of new discrepancies:
$$
\Delta_i=\inf\{k\ge 0, \ |E^{D,n}_k|=i\},
$$
with the convention $\inf\emptyset=\infty$. We also define 
$$
\vec e_i=\left\{
\begin{aligned}
&E_{\Delta_i}^{D,n}\setminus E_{\Delta_{i-1}}^{D,n}, \text{ if } \Delta_i<\infty\\
&\emptyset \hspace{0.97 in} \text{otherwise}
\end{aligned}
\right..
$$

%\note{In the formula above, we changed $E_{\Delta_i-1}^{D,n}$ to $E_{\Delta_{i-1}}^{D,n}$, but actually they are the same}

Noting that $\vec e_i$ is either empty of a singleton subset with one edge, we will, without loss of generality not specify the difference between the subset and the possible $i$th edge discrepancy. 

Now we are ready to introduce classifications on discrepancies as follows: 
\begin{itemize}
	\item For any $i=1$, we say $\vec e_1$ is {\bf good} if either $\vec e_1=\emptyset$ or 
	$$
	d(\vec e_1, (n+1,0))<n^{1-5\alpha}. 
	$$
	Here $d(\cdot,\cdot)$ is defined as the minimum distance over all endpoints.
	\item For any $i\ge 1$, we say $\vec e_i$ is {\bf good} if either $\vec e_i=\emptyset$ or 
	$$
	d(\vec e_i, E_{\Delta_{i-1}}^{D,n})<n^{1-5\alpha}. 
	$$
	Otherwise, we will say $\vec e_i$ is {\bf bad}. 
	\item If an $\vec e_i$ is bad, we call it {\bf devastating} if and only if $\vec e_i$ intersects with $[-n^{1-3\alpha}, n^{1-3\alpha}]\times [0,\log n]$. 
\end{itemize}
Moreover, one can also define 
$$
\kappa=\inf\{i\ge 1, \ s.t. \ \vec e_i \text{ is bad}\}. 
$$
By definition, one may see that $E_{\Delta_{n^\alpha}}^{D,n}\cap K \not=\emptyset$ only if either of the following two events happens:
\begin{itemize}
	\item Event $A$: $\kappa<n^\alpha$, and $\vec e_\kappa$ is devastating. 
	\item Event $B$: $\kappa<n^\alpha$, $\vec e_\kappa$ is bad but not devastating, and there is at least one bad event within $\kappa+1,\kappa+2,\cdots, n^\alpha$. 
\end{itemize}
To see the above assertion, one can from the definition of $A$ and $B$ see that $(A\cup B)^c$ can also be written as the union of $C\cup D$, where the events are defined as follows:
\begin{itemize}
	\item Event $C$: $\vec e_i$ are good for all $i=1,2, \cdots, n^\alpha$.
	\item Event $D$:  $\kappa<n^\alpha$, $\vec e_\kappa$ is bad but not devastating, and there are no bad events within $\kappa+1,\kappa+2,\cdots, n^\alpha$. 
\end{itemize}
Moreover, for each $i$, we define 
$$
l^+_i=\min\left\{x_1>0: \ s.t. \ \exists x_2 \text{ with $x=(x_1,x_2)$ a vertex for some edge within $E_{\Delta_i}^{D,n}$}  \right\},
$$
and
$$
r^-_i=\max\left\{x_1<0: \ s.t. \ \exists x_2 \text{ with $x=(x_1,x_2)$ a vertex for some edge within $E_{\Delta_i}^{D,n}$}  \right\}.
$$
Thus under event $C$ or $D$, 
$$
l^+_i\ge  n^{1-3\alpha}- n^\alpha\times n^{1-5\alpha}\ge n^{1-3\alpha}/2
$$
and 
$$
r^-_i\le -n^{1-3\alpha}+ n^\alpha\times n^{1-5\alpha}\le -n^{1-3\alpha}/2,
$$
which implies no discrepancy may be within $[-n^{1-3\alpha}/2,n^{1-3\alpha}/2]\times [0,\log n]\supset K$ for all sufficiently large $n$. 

Thus, now we only need to find the desired upper bound for the probability of events $A$ and $B$. For any $k$, define event 
$$
G_k=\{\vec e_i \text{ is good for $i=1,\cdots, k-1$}\}.
$$
\subsection{Upper bounds on $\prob(A)$}
\label{sub_prob_A}

For event $A$, by definition and strong Markov property one has 
\beq
\label{prob_A}
\begin{aligned}
\prob(A)&=\sum_{k=1}^{n^\alpha} \prob\left(G_k, \ \vec e_k \text{ is devastating}\right)\\
&=\sum_{k=1}^{n^\alpha} \sum_{j=0}^\infty\sum_{(\bar A_0, \tilde A_0)}\prob\left(G_k, \ \Delta_{k-1}<\infty, \ \Delta_{k}-\Delta_{k-1}>j, \ (\hat A^n_{\Delta_{k-1}+j}, \hat A^{n+1}_{\Delta_{k-1}+j})=(\bar A_0, \tilde A_0)\right)\\
&\hspace{1.21 in}\mathbb{P}_{(\bar A_0, \tilde A_0)}\left(\Delta_1=1, \vec e_1 \text{ is devastating}\right),
\end{aligned}
\eeq
where $\mathbb{P}_{(\bar A_0, \tilde A_0)}$ stands for the distribution of the the truncated embedded process $(\hat A^n_k,\hat A^{n+1}_k)$ starting from initial condition $(\bar A_0, \tilde A_0)$. 

%\note{Here it should be the embedded discrete time process. Corrected}

At the same time, with similar calculation we have for any $k=1,2,\cdots, n^\alpha$
\beq
\label{prob_base}
\begin{aligned}
&\prob(G_k,\Delta_k<\infty)=\\
&\sum_{j=0}^\infty\sum_{(\bar A_0, \tilde A_0)}\prob\left(G_k, \Delta_{k-1}<\infty, \Delta_{k}-\Delta_{k-1}>j, (\hat A^n_{\Delta_{k-1}+j}, \hat A^{n+1}_{\Delta_{k-1}+j})=(\bar A_0, \tilde A_0)\right)\\
&\hspace{0.94 in}\mathbb{P}_{(\bar A_0, \tilde A_0)}\left(\Delta_1=1\right)\le 1. 
\end{aligned}
\eeq
Note that for any configuration $(\bar A_0, \tilde A_0)$ such that 
$$
\prob\left(G_k, \ \Delta_{k-1}<\infty, \ \Delta_{k}-\Delta_{k-1}>j, \ (\hat A^n_{\Delta_{k-1}+j}, \hat A^{n+1}_{\Delta_{k-1}+j})=(\bar A_0, \tilde A_0)\right)\not=0, 
$$
one must have $|\bar E_0\triangle \tilde E_0|\le k-1$. Now recalling the transition dynamic of the embedded chain, one has for all feasible $(\bar A_0, \tilde A_0)$ such that $\bar V_0\cup \tilde V_0\subset  [-n-\log n, n+\log n]\times[0,\log n]$
$$
\mathbb{P}_{(\bar A_0, \tilde A_0)}\left(\Delta_1=1\right)=\frac{\lambda^D(\bar A_0,\tilde A_0)}{\lambda^T(\bar A_0,\tilde A_0)}
$$ 
where $\lambda^D(\cdot,\cdot)$ was defined in \eqref{rate_of_discrepancy} and 
$$
\lambda^T(\bar A_0,\tilde A_0)=\sum_{\vec e} \max\{\harm_{\bar V_0}(\vec e), \harm_{\tilde V_0}(\vec e)\}.
$$
Otherwise $\mathbb{P}_{(\bar A_0, \tilde A_0)}\left(\Delta_1=1\right)=0$. Now for 
$$
\mathbb{P}_{(\bar A_0, \tilde A_0)}\left(\Delta_1=1, \vec e_1 \text{ is devastating}\right)
$$ 
recall that in \eqref{rate_of_discrepancy} we have 
$$
\begin{aligned}
	\lambda^D(\bar A_0,\tilde A_0)&=\sum_{\vec e\in E_1} \left|\harm_{\bar V_0}(\vec e)-\harm_{\tilde V_0}(\vec e)\right|\\
	&+\sum_{\vec e\in E_2}\harm_{\tilde V_0}(\vec e)+\sum_{\vec e\in E_3}\harm_{\bar V_0}(\vec e)+\sum_{\vec e\in E_4}\harm_{\bar V_0}(\vec e)\\
	&+\sum_{\vec e\in E_5}\harm_{\bar V_0}(\vec e)+\sum_{\vec e\in E_6}\harm_{\tilde V_0}(\vec e)+\sum_{\vec e\in E_7}\harm_{\tilde V_0}(\vec e). 
\end{aligned}
$$
For any $\vec e\in \cup_{i=2}^7E_i$, recall that at least one of the endpoints of $\vec e$ has to be in $\bar V_0\Delta \tilde V_0$. Thus it is easy to see 
$$
d(\vec e, E_{\Delta_{k-1}}^{D,n})=0. 
$$
Combining this with the fact that for all feasible $(\bar A_0,\tilde A_0)$, $\bar E_0\triangle \tilde E_0\subset (-\infty,-n+2n^{1-4\alpha})\cup (n-2n^{1-4\alpha},\infty)\times [0,\log n]$, which is disjoint with $[-2n^{1-3\alpha},2n^{1-3\alpha}]\times[0,\log n]$, we have 
\beq
\label{devastating_1}
\mathbb{P}_{(\bar A_0, \tilde A_0)}\left(\Delta_1=1, \vec e_1 \text{ is devastating}\right)\le \frac{\sum_{\vec e=x\to y\in E_1, |x_1|\le 2n^{1-3\alpha}} \left|\harm_{\bar V_0}(\vec e)-\harm_{\tilde V_0}(\vec e)\right|}{\lambda^T(\bar A_0,\tilde A_0)}
\eeq
when $\bar V_0\cup \tilde V_0\subset  [-n-\log n, n+\log n]\times[0,\log n]$ and equals to 0 otherwise. Thus for any configuration $(\bar A_0, \tilde A_0)$ such that 
$$
\prob\left(G_k, \ \Delta_{k-1}<\infty, \ \Delta_{k}-\Delta_{k-1}>j, \ (\hat A^n_{\Delta_{k-1}+j}, \hat A^{n+1}_{\Delta_{k-1}+j})=(\bar A_0, \tilde A_0)\right)\not=0, 
$$
and that 
$$
\mathbb{P}_{(\bar A_0, \tilde A_0)}\left(\Delta_1=1, \vec e_1 \text{ is devastating}\right)\not=0, 
$$
we have 
\beq
\label{devastating_2}
\frac{
	\mathbb{P}_{(\bar A_0, \tilde A_0)}\left(\Delta_1=1, \vec e_1 \text{ is devastating}\right)}{\mathbb{P}_{(\bar A_0, \tilde A_0)}\left(\Delta_1=1\right)}\le \frac{\sum_{\vec e=x\to y\in E_1, |x_1|\le 2n^{1-3\alpha}} \left|\harm_{\bar V_0}(\vec e)-\harm_{\tilde V_0}(\vec e)\right|}{\lambda^D(\bar A_0,\tilde A_0)}. 
\eeq
Now for the numerator of \eqref{devastating_2}, again we have 
\beq
\label{devastating_3}
\begin{aligned}
&\sum_{\vec e=x\to y\in E_1, |x_1|\le 2n^{1-3\alpha}} \left|\harm_{\bar V_0}(\vec e)-\harm_{\tilde V_0}(\vec e)\right|\\
\le\hspace{-0.4 in} &\sum_{\vec e=x\to y\in E_1, |x_1|\le 2n^{1-3\alpha}} \left[\harm_{\bar V_0\cap \tilde V_0}(\vec e)-\harm_{\bar V_0\cup \tilde V_0}(\vec e)\right]\\
=\hspace{-0.4 in} &\sum_{\vec e=x\to y\in E_1, |x_1|\le 2n^{1-3\alpha}}\sum_{z\in \bar V_0\Delta \tilde V_0} \harm_{\bar V_0\cup \tilde V_0}(z) \prob_z
\left(X_{\tau_{(\bar V_0\cap \tilde V_0)\cup L_0}-1}=y,X_{\tau_{(\bar V_0\cap \tilde V_0)\cup L_0}}=x \right)\\
& \hspace{0.25 in}\le  \harm_{\bar V_0\cup \tilde V_0}(\bar V_0\triangle \tilde V_0) \sup_{z\in \bar V_0\triangle \tilde V_0}\prob_z\left(\tau_{Box}<\tau_{L_0} \right),
\end{aligned}
\eeq

%\note{The second $\le$ above should be =. Corrected. But $\le$ is also correct. Also, $x\in \bar V_0\Delta \tilde V_0$, should be $z\in \bar V_0\Delta \tilde V_0$. All corrected.}

where 
$$
Box=[- 2n^{1-3\alpha}, 2n^{1-3\alpha}]\times[0,\log n]. 
$$
At the same time, note that for any feasible configuration $(\bar A_0, \tilde A_0)$, 
$$
\bar V_0\triangle \tilde V_0\subset Box_0=[n-2n^{1-4\alpha},n+\log n]\cup [-n-\log n, -n+2n^{1-4\alpha}]\times [0,\log n]
$$
which implies that 
\beq
\label{devastating_4}
\sup_{z\in \bar V_0\triangle \tilde V_0}\prob_z\left(\tau_{Box}<\tau_{L_0} \right)\le \sup_{z\in Box_0}\prob_z\left(\tau_{Box}<\tau_{L_0} \right). 
\eeq
Moreover, for each edge $\vec e=z\to w$ such that $z\in \bar V_0\triangle \tilde V_0$ and $w\notin \bar V_0\cup \tilde V_0$, by definition it has to belong to $E_3\cup E_6$ and thus by \eqref{rate_of_discrepancy}
\beq
\label{devastating_5}
\lambda^D(\bar A_0,\tilde A_0)\ge \harm_{\bar V_0\cup \tilde V_0}(\bar V_0\triangle \tilde V_0). 
\eeq
Now combining \eqref{prob_A}-\eqref{devastating_5} we have 
\beq
\label{devastating_6}
\prob(A)\le n^\alpha \sup_{x\in Box_0}\prob_x\left(\tau_{Box}<\tau_{L_0} \right). 
\eeq
Now we prove the following lemma:
\begin{lemma}
\label{lem_hitting}
For all $\alpha<1/5$ and all sufficiently large $n$
$$
\sup_{x\in Box_0}\prob_x\left(\tau_{Box}<\tau_{L_0} \right)\le n^{-1-2.5\alpha}. 
$$	
\end{lemma}
\begin{proof}
The proof of Lemma \ref{lem_hitting} follows a similar argument as in \cite{procaccia2018stationary}. Note that for any $x\in Box_0$,
$$
\prob_x\left(\tau_{Box}<\tau_{L_0} \right)\le \sum_{y\in \partial^{in} Box} \prob_x(\tau_{y}<\tau_{L_0}). 
$$
Then let $V_n=n/2\times[0,\infty)$, $V_n^1=n/2\times[0,n^4)$, and $V_n^2=n/2\times(n^4,\infty)$. By a similar argument as in \cite{procaccia2018stationary} we have 
\beq
\label{hitting_0}
\prob_x\left(\tau_{V_n}<\tau_{L_0} \right)\le n^{-1+\alpha/5}
\eeq
while 
$$
\prob_x\left(\tau_{V_n}<\tau_{L_0}, \tau_{V_n}=\tau_{V_n^2}\right)\le \frac{1}{n^3}. 
$$
Thus by strong Markov property, 
\beq
\label{hitting_1}
\begin{aligned}
\prob_x(\tau_{y}<\tau_{L_0})&= \sum_{z\in V_n} \prob_x\left(\tau_{V_n}<\tau_{L_0}, \tau_{V_n}=\tau_{z}\right) \prob_z(\tau_{y}<\tau_{L_0})\\
&\le \frac{1}{n^3}+ \sum_{z\in V_n^1} \prob_x\left(\tau_{V_n}<\tau_{L_0}, \tau_{V_n}=\tau_{z}\right) \prob_z(\tau_{y}<\tau_{L_0}).
\end{aligned}
\eeq
Moreover, for each $z\in V_n^1$, by reversibility of random walk (\cite{harmonic_measure_1987}), we have 
\beq
\label{hitting_2}
\prob_z(\tau_{y}<\tau_{L_0})\le \prob_y(\tau_{z}<\tau_{L_0})\ev_z[\# \text{ of visits to $z$ in $[0,\tau_{L_0})$}].
\eeq
For the first term in \eqref{hitting_2}, the same argument for \eqref{hitting_0} implies that 
$$
\prob_y(\tau_{z}<\tau_{L_0})\le \prob_y(\tau_{V_n}<\tau_{L_0})\le n^{-1+\alpha/5}.
$$
While for the second term in \eqref{hitting_2}, by \cite{procaccia2018stationary} we have there is a constant $C<\infty$ independent to $n$ such that for all $z\in V_n^1$
$$
\ev_z[\# \text{ of visits to $z$ in $[0,\tau_{L_0})$}]\le C\log n.
$$
Thus we have
\beq
\label{hitting_3}
\prob_z(\tau_{y}<\tau_{L_0})\le C n^{-1+\alpha/5}\log n. 
\eeq
Combining \eqref{hitting_0}-\eqref{hitting_3}, we have for any $x\in Box_0$, $y\in \partial^{in}Box$, 
$$
\prob_x(\tau_{y}<\tau_{L_0})\le C n^{-2+2\alpha/5}\log n. 
$$ 
Finally, noting that $|\partial^{in}Box|\le 5n^{1-3\alpha}$, we have
$$
\sup_{x\in Box_0}\prob_x\left(\tau_{Box}<\tau_{L_0} \right)\le C n^{-2+2\alpha/5}\log n \cdot n^{1-3\alpha} \le n^{-1-2.5\alpha}
$$
for all sufficiently large $n$. 
\end{proof}
Combining \eqref{devastating_6} and Lemma \ref{lem_hitting}, we have
\beq
\label{prob_A_final}
\prob(A)\le n^\alpha \sup_{x\in Box_0}\prob_x\left(\tau_{Box}<\tau_{L_0} \right)\le n^{-1-1.5\alpha}. 
\eeq

\subsection{Upper bounds on $\prob(B)$}
\label{sub_prob_B}

Now we find the upper bound for $\prob(B)$. Recall that 
\begin{itemize}
	\item Event $B$: $\kappa<n^\alpha$, $\vec e_\kappa$ is bad but not devastating, and there is at least one bad event within $\kappa+1,\kappa+2,\cdots, n^\alpha$. 
\end{itemize}
For any $k\ge 1$ define event 
$$
B_k=\left\{\vec e_1,\cdots, \vec e_{k-1} \text{ are good, $\vec e_k$ is bad} \right\}.
$$
Then by Markov property, we have 
\beq
\label{prob_B_1}
\begin{aligned}
\prob(B)=\sum_{k=1}^{n^\alpha-1} \sum_{(\bar A_0, \tilde A_0)}&\prob\left(B_k, \vec e_k \text{ is not devastating, } (\hat A^n_{\Delta_k},\hat A^{n-1}_{\Delta_k})=(\bar A_0, \tilde A_0)\right)\left(\sum_{j=1}^{n^\alpha-k} \mathbb{P}_{(\bar A_0, \tilde A_0)}(B_j)\right). 
\end{aligned}
\eeq
Using the argument in Subsection \ref{sub_prob_A} we have for all $k+j\le n^\alpha$ and any feasible configuration $(\bar A_0, \tilde A_0)$ such that 
$$
\prob\left(B_k, \vec e_k \text{ is not devastating, } (\hat A^n_{\Delta_k},\hat A^{n-1}_{\Delta_k})=(\bar A_0, \tilde A_0)\right)\not=0
$$
%\note{$A^{n=1}_{\Delta_k}$ should be $A^{n-1}_{\Delta_k}$. Corrected}

and that $\mathbb{P}_{(\bar A_0, \tilde A_0)}(B_i)$ not always =0 for all $i\le n^{\alpha}-k$, we have
$$
\mathbb{P}_{(\bar A_0, \tilde A_0)}(B_j)\le \mathbb{P}_{(\bar A_0, \tilde A_0)}(G_j, \Delta_j<\infty) P_{(0,\log n)}\left(\tau_{U_n}<\tau_{L_0}\right)\le P_{(0,\log n)}\left(\tau_{U_n}<\tau_{L_0}\right)
$$ 
where $U_n=\{-n^{1-5\alpha}/2,n^{1-5\alpha}/2\}\times [0,\infty)$. Again from \cite{procaccia2018stationary}, we have 
\beq
\label{prob_B_2}
\prob_{(0,\log n)}\left(\tau_{U_n}<\tau_{L_0}\right)\le n^{-1+6\alpha}. 
\eeq 
Thus by \eqref{prob_B_1} and \eqref{prob_B_2}, 
\beq
\label{prob_B_3}
\prob(B)\le n^{-1+7\alpha} \left(\sum_{k=1}^{n^\alpha-1} \prob(B_k)\right). 
\eeq
Again using the same argument, we have for any $k\le n^\alpha-1$, 
$$
P(B_k)\le P(G_k, \Delta_k<\infty)\prob_{(0,\log n)}\left(\tau_{U_n}<\tau_{L_0}\right)\le n^{-1+6\alpha} 
$$
which implies that 
\beq
\label{prob_B_3}
\prob(B)\le n^{-2+14\alpha}.
\eeq
Letting $\alpha=1/16$, then Lemma \ref{lemma_discrepancy_loc} follows from Lemma \ref{lem_hitting} and \eqref{prob_B_3}. \end{proof}

\begin{proof}[Proof of Theorem \ref{thm_summable}]

At this point, Theorem \ref{thm_summable} follows from the combination of Lemma \ref{lemma_number_discrepancies} and Lemma \ref{lemma_discrepancy_loc}. 
\end{proof}

%%%%%%%%%%%%%%%%%%%%%%%%%%%%%%%%%%%%%%%%%%%%%%%%%%%%%%%%%%%%%%%%%%%%%%

%%%%%%%%%%%%%%%%%%%%%%%%%%%%%%%%%%%%%%%%%%%%%%%%%%%%%%%%%%%%%%%%%%%%%%

%%%%%%%%%%%%%%%%%%%%%%%%%%%%%%%%%%%%%%%%%%%%%%%%%%%%%%%%%%%%%%%%%%%%%%

%%%%%%%%%%%%%%%%%%%%%%%%%%%%%%%%%%%%%%%%%%%%%%%%%%%%%%%%%%%%%%%%%%%%%%

%%%%%%%%%%%%%%%%%%%%%%%%%%%%%%%%%%%%%%%%%%%%%%%%%%%%%%%%%%%%%%%%%%%%%%

%%%%%%%%%%%%%%%%%%%%%%%%%%%%%%%%%%%%%%%%%%%%%%%%%%%%%%%%%%%%%%%%%%%%%%

%%%%%%%%%%%%%%%%%%%%%%%%%%%%%%%%%%%%%%%%%%%%%%%%%%%%%%%%%%%%%%%%%%%%%%

%%%%%%%%%%%%%%%%%%%%%%%%%%%%%%%%%%%%%%%%%%%%%%%%%%%%%%%%%%%%%%%%%%%%%%
%%%%%%%%%%%%%%%%%%%%%%%%%%%%%%%%%%%%%%%%%%%%%%%%%%%%%%%%%%%%%%%%%%%%%%
\section{Proof of Theorem \ref{theorem_well_define}: Existence of the SDLA}
Theorem \ref{theorem_well_define} follows immediately once we show that the limiting process obtained by Theorem \ref{thm_summable} has the desired property.
\begin{lemma}\label{lem:approxharmonicmeasure}
	Fix a finite set $K$, $t>0$ and some $\epsilon>0$. $\exists N$ finite a.s., such that for all $n>N$, for all $0\le s\le t$ and any $x\in K$, 
	
	\begin{equation}\label{eq:needtoshowharm}
	|\CH_{L_0\cup A_s^n}(x)-\CH_{L_0\cup A_s}(x)|<\epsilon.
	\end{equation}
	
\end{lemma}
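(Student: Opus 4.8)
The plan is to upgrade the local convergence $A^n_s\to A_s$ supplied by Theorem~\ref{thm_summable} to convergence of the stationary harmonic measure at each point of $K$, by showing that only the sites of the two aggregates lying in a large but \emph{bounded} horizontal window matter for $\CH_{\cdot}(x)$, $x\in K$, while the far‑away sites contribute a quantity that can be made arbitrarily small uniformly in $n$. First I would record the consequences of Theorem~\ref{thm_summable}: by Borel--Cantelli, for every bounded box $R\subset\BH$ there is an a.s.\ finite random $N_R$ with $A^n_{s'}\cap R=A^{n+1}_{s'}\cap R$ for all $n\ge N_R$ and all $s'\le t$ (this is how $\{A_{s'}\}_{s'\le t}$ is built), hence $A^n_{s'}\cap R=A_{s'}\cap R$ for $n\ge N_R$. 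Since the DLA stays below the interface process, $A^n_{s'}\cup A_{s'}\subset I^{L_0}_t:=\bigcup_{x\in L_0}I^x_t$ for all $n$ and all $s'\le t$. Using the exponential tail $\prob(\|I^0_1\|_2\ge h)\le e^{-ch}$ from Lemma~\ref{lem_path} together with a union bound over the roots $x\in L_0$ of the sub‑interfaces, one gets $\prob(\text{the column }\{z_1=v\}\text{ of }I^{L_0}_t\text{ reaches height }h)\le Ce^{-ch}$ uniformly in $v\in\BZ$, so (fixing a deterministic $C_2$ large) a.s.\ there are finite random $M^\ast,H^\ast$ with: the height of $I^{L_0}_t$ over column $v$ is $\le C_2\log(2+|v|)$ for $|v|\ge M^\ast$, and $\le H^\ast$ for every $v$. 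Fix $M$ with $K\subset[-M,M]\times[0,M]$.

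Next I would fix $s\le t$, write $B=L_0\cup A_s$ and $B_n=L_0\cup A^n_s$ (both connected and containing $L_0$, as is $B_n\cup B$), fix $x\in K$, and choose $\hat M\ge\max(2M,M^\ast)$ (large, to be fixed below) together with $\hat H:=C_2\log(2+\hat M)+H^\ast+1$. By the previous paragraph $I^{L_0}_t$ has no site with $|z_1|\le\hat M$ above height $\hat H$, so once $n$ is large enough that $A^n_s$ and $A_s$ agree on $[-\hat M,\hat M]\times[0,\hat H]$ they agree on the whole strip $\{|z_1|\le\hat M\}$, and therefore $B_n\triangle B=(A^n_s\triangle A_s)\setminus L_0\subset\{z:\ |z_1|>\hat M,\ z_2\ge1\}$. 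Applying the computation of Section~6 (the display following \eqref{rate_of_discrepancy_2}) to the pair $B_n,B$ in place of $\hat V^n_t,\hat V^{n+1}_t$, monotonicity \eqref{basic 1}--\eqref{basic 2} and the strong Markov property give
\[
|\CH_{B_n}(x)-\CH_B(x)|\ \le\ \CH_{B_n\cap B}(x)-\CH_{B_n\cup B}(x)\ =\ \sum_{z\in B_n\triangle B}\CH_{B_n\cup B}(z)\,\prob_z\!\big(X_{\tau_{B_n\cap B}}=x\big).
\]

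Then I would bound the three ingredients. First, $\CH_{B_n\cup B}(z)\le Cz_2^{1/2}$ by Theorem~\ref{theorem: uniform_path}. Second, since $L_0\subset B_n\cap B$ and $|z_1|>\hat M\ge 2|x_1|$, the event on the right forces the walk from $z$ to reach $x$ before $L_0$, so the half‑plane Green function (Poisson kernel) estimates already used in Lemma~\ref{lem_hitting} give, for $\hat M$ large, $\prob_z(X_{\tau_{B_n\cap B}}=x)\le\prob_z(\tau_x<\tau_{L_0})\le C\,z_2x_2/|z_1-x_1|^2\le C'\,z_2/|z_1|^2$. Third, the number of sites of $B_n\triangle B$ in column $v$ is at most the height of $I^{L_0}_t$ there, hence $\le C_2\log(2+|v|)$ because $|v|>\hat M\ge M^\ast$. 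Summing $\sum_{j\le C_2\log(2+|v|)}Cj^{1/2}\cdot C'j/v^2$ over the heights $j$ in each column and then over columns,
\[
|\CH_{B_n}(x)-\CH_B(x)|\ \le\ C''\sum_{|v|>\hat M}\frac{\big(\log(2+|v|)\big)^{5/2}}{v^2}\ =:\ \delta(\hat M),
\]
a bound that does not depend on $n$, on $s\le t$, or on $x\in K$, and with $\delta(\hat M)\to0$ as $\hat M\to\infty$.

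To conclude: given $\epsilon>0$, on the a.s.\ event above I would pick $\hat M\ge\max(2M,M^\ast)$ with $\delta(\hat M)<\epsilon$, set $\hat H:=C_2\log(2+\hat M)+H^\ast+1$, and take $N:=N_R$ for $R=[-\hat M,\hat M]\times[0,\hat H]$; then for every $n>N$, every $s\le t$ and every $x\in K$ the two displays apply and yield $|\CH_{L_0\cup A^n_s}(x)-\CH_{L_0\cup A_s}(x)|\le\delta(\hat M)<\epsilon$, which is \eqref{eq:needtoshowharm}. The step I expect to be the crux is the tail estimate of the third paragraph: it is essential that $\prob_z(\tau_x<\tau_{L_0})$ decays \emph{quadratically} in $|z_1|$ (a linear decay already makes the column sum diverge, even with the logarithmic height bound in hand), and that the $O(\log|v|)$ bound on the number of discrepancy sites per column holds uniformly in $n$ — which is why one must dominate every $A^n_s$ by the single interface $I^{L_0}_t$ rather than by $I^{[-n,n]\times0}_t$, whose height is only controlled by $\log n$.
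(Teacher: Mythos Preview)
Your argument is correct, but it takes a genuinely different route from the paper's. The paper's proof is quite short: it invokes an external result (\cite[Lemma~2.6]{procaccia2018stationary}) to say that for some large but fixed $m$ the truncated sum $\sum_{|w|<m^{1.1}}\prob_{(w,m)}(S_{\tau_{L_0\cup A^n_s}}=x)$ already approximates $\CH_{L_0\cup A^n_s}(x)$ to within $\epsilon/2$, uniformly in $n$ (this uses only the sub-linear growth of the interface). Since only finitely many starting points $(w,m)$ enter, one then picks a finite box $K'$ so large that the walk from each such $(w,m)$ exits $K'$ before hitting $K$ with probability $<\epsilon/(4m^{1.1})$; the truncated sum therefore depends only on $A^n_s\cap K'$, which by Theorem~\ref{thm_summable} equals $A_s\cap K'$ for $n$ large. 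Your approach instead recycles the harmonic decomposition of Section~6, writing $\CH_{B_n\cap B}(x)-\CH_{B_n\cup B}(x)$ as a sum over $z\in B_n\triangle B$ and bounding it column by column via Theorem~\ref{theorem: uniform_path}, a pointwise half-plane hitting estimate, and an a.s.\ logarithmic height envelope for $I^{L_0}_t$ obtained from Borel--Cantelli.

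What each buys: the paper's proof is shorter and avoids any summation over the (infinite) symmetric difference, but it leans on an external lemma and gives no quantitative rate. Your proof is self-contained relative to this paper and is more explicit (the tail $\delta(\hat M)$ is a concrete convergent series), at the price of more moving parts. Two small caveats worth tightening: the bound $\prob_z(\tau_x<\tau_{L_0})\le C\,z_2x_2/|z_1-x_1|^2$ is the standard half-plane Green function estimate $G_\BH(z,x)/G_\BH(x,x)$, but Lemma~\ref{lem_hitting} only uses the coarser line-hitting bounds from \cite{procaccia2018stationary}, so you should cite the Green function estimate directly rather than attribute it to that lemma; and when you pass from the finite-$N$ identity to the limit you are implicitly interchanging $\lim_N$ with an infinite sum over $z\in B_n\triangle B$, which is justified by dominated convergence using the uniform-in-$N$ bound $\CH_{B_n\cup B,N}(z)\le Cz_2^{1/2}$ from Theorem~\ref{theorem: uniform_path} together with your column-wise summability --- it would be worth saying so.
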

\begin{proof}
	By \cite[Lemma 2.6]{procaccia2018stationary} and the sub-linear growth of the interface model proved in Theorem \ref{thm_log_growth} and the fact we constructed all $A^n_s$ to be subsets of the interface model, there exists some $m>0$ such that for every every $n\in\BN\cup\{\infty\}$ and $x\in K$
	
	\begin{equation}\label{eq:harmconcentrating}
	\left|\sum_{|x|<m^{1.1}}\prob^{(x,m)}\left(S_{\tau_{L_0\cup A^n_s}}=x\right) -\CH_{L_0\cup A^n_s}(x)\right|<\epsilon/2
	.\end{equation}
	
	Let $K'\subset\BH$ be a large finite subset such that
	$$
	2m^{1.1}\max_{|x|<m^{1.1}}\prob^{(x,m)}(\tau_{K'^c}<\tau_K)<\epsilon/2
	.$$ 
	
	By Theorem \ref{thm_summable} we know that there is some $N\in\BN$ large enough such that for every $n>N$, 
	
	$$A_s^n\cap K'=A_s^N\cap K'=A_s\cap K'.$$
	Thus
	$$
	\left|\sum_{|x|<m^{1.1}}\prob^{(x,m)}\left(S_{\tau_{L_0\cup A^n_s}}=x\right) -\sum_{|x|<m^{1.1}}\prob^{(x,m)}\left(S_{\tau_{L_0\cup A_s}}=x\right)\right|<\epsilon/2
	.$$
	Together with \eqref{eq:harmconcentrating} we obtain \eqref{eq:needtoshowharm}.
	%\note{Proof in Jiayan's paper needs to be adapted such that m is just a function of the sub linearity and the highest "bad" point}
	
\end{proof}

%%%%%%%%%%%%%%%%%%%%%%%%%%%%%%%%%%%%%%%%%%%%%%%%%%%%%%%%%%%%%%%%%%%%%%

%%%%%%%%%%%%%%%%%%%%%%%%%%%%%%%%%%%%%%%%%%%%%%%%%%%%%%%%%%%%%%%%%%%%%%

It remains to prove that $\{A_s\}_{s\le t}$ is Markov with the correct stationary harmonic measure as the infinitesimal generator.

\begin{lemma}\label{lem:correct_rate}
	For every finite subset $K\subset\BH$ and any $t>0$, for any $s\in[0,t]$ and $x\in K$,
	$$
	\lim_{\Delta s\to 0}\frac{\prob\left( A_{s+\Delta s}(x)=1|A_s(x)=0, \{A_\xi\}_{\xi\le s}\right)}{\Delta s}=\harm_{L_0\cup A_s}(x) \text{ a.s.} 
	$$
\end{lemma}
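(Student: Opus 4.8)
The plan is to deduce Lemma \ref{lem:correct_rate} from the coupling construction together with the convergence of the harmonic measure established in Lemma \ref{lem:approxharmonicmeasure}. Recall that the limiting process $\{A_s\}_{s\le t}$ is obtained as the almost-sure stabilization of the coupled sequence $\{A^n_s\}$ on every compact set (Theorem \ref{thm_summable}), and each $A^n_s$ is driven, via Poisson thinning, by the same graphic representation: on the edge $\vec e = x' \to x$ the birth clock rings at rate $\lambda_{\vec e} = \sqrt{(x')_2}\vee 1$, and a ring at time $r$ is accepted iff the thinning variable $U^{x'\to x}_j$ falls below $\harm_{V^n_{r-}}(\vec e)/\lambda_{\vec e}$. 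Summing over the (at most four) in-edges $\vec e = x' \to x$ with $x' \in V^n_{r-}$, the instantaneous rate at which $x$ is added to $V^n_r$, given $x \notin V^n_{r-}$ and given the past, is exactly $\sum_{x': x'\to x} \harm_{V^n_{r-}}(\vec e) = \harm_{V^n_{r-}}(x)$ by \eqref{harmonic measure point old} (identifying $\harm$ with its $N\to\infty$ limit, and recalling $\harm_B = \harm_{B\cup L_0}$). So each $A^n$ is a time-inhomogeneous Markov jump process whose generator acts on the coordinate at $x$ with flip-up rate $\harm_{L_0\cup A^n_s}(x)$.

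The key steps, in order, are as follows. First I would fix the finite set $K$, a point $x \in K$, a time $s \in [0,t]$, and $\epsilon > 0$, and choose (via Lemma \ref{lem:approxharmonicmeasure}) a finite, random but a.s.\ finite, $N$ and a finite set $K' \supset K$ such that for all $n > N$ one has $A^n_\xi \cap K' = A_\xi \cap K'$ for all $\xi \le t$, and $|\harm_{L_0\cup A^n_\xi}(x) - \harm_{L_0 \cup A_\xi}(x)| < \epsilon$ for all $\xi \le t$ and all such $n$; in fact the first of these already forces $\harm$-values at $x$ computed from $A^n$ and from $A$ to agree once $K'$ is taken large enough relative to the concentration scale $m$ in \eqref{eq:harmconcentrating}. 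Second, for fixed $n > N$ I would write the elementary jump-process estimate
\[
\Bigl|\,\prob\bigl(A^n_{s+\Delta s}(x)=1 \mid A^n_s(x)=0, \{A^n_\xi\}_{\xi\le s}\bigr) - \harm_{L_0\cup A^n_s}(x)\,\Delta s\,\Bigr| \le C(\log n)\,(\Delta s)^2
\]
on the event that no truncation has occurred, which holds because between consecutive jumps the rate is frozen and, on the truncated box, every relevant $\harm$-value is bounded by $2\sqrt{\log n}$ as in Section 5, so the probability of two or more relevant Poisson rings in a window of length $\Delta s$ is $O((\Delta s)^2)$; dividing by $\Delta s$ and letting $\Delta s \to 0$ gives the derivative $\harm_{L_0 \cup A^n_s}(x)$. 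Third, I would pass from $A^n$ to $A$: on the a.s.\ event that $n > N$, the conditioning $\{A^n_\xi\}_{\xi \le s}$ restricted to $K'$ coincides with $\{A_\xi\}_{\xi \le s}$ restricted to $K'$, and — crucially — whether $x$ (and its four neighbors, all in $K'$ for $K'$ large) is added in the window $(s, s+\Delta s]$ is, up to the negligible two-ring event, determined by the Poisson clocks and thinning variables on the finitely many in-edges at $x$, whose acceptance thresholds depend only on $\harm_{L_0\cup A^n_{\cdot}}(\vec e) = \harm_{L_0\cup A_{\cdot}}(\vec e)$. Hence the conditional transition probability for $A$ equals that for $A^n$ up to the $O((\Delta s)^2)$ error, and its limiting derivative is $\harm_{L_0\cup A_s}(x)$.

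The main obstacle I anticipate is the interchange of the two limits $\Delta s \to 0$ and $n \to \infty$: the per-step error bound $C(\log n)(\Delta s)^2$ degrades with $n$, so one cannot simply send $n \to \infty$ first. The way around it is that $N = N(K,t,\epsilon,\omega)$ from Lemma \ref{lem:approxharmonicmeasure} is finite almost surely, so on that full-probability event it suffices to work with a single, fixed (though random) $n > N$ and take $\Delta s \to 0$ with that $n$ held fixed; the $\log n$ factor is then just a finite constant. One must also be slightly careful that the conditioning event $\{A_s(x)=0\}$ has positive probability and that the filtration $\{A_\xi\}_{\xi \le s}$ is rich enough to determine $N$ — but $N$ is measurable with respect to the graphic representation, which is what generates all the $A^n$ and hence $A$, so after conditioning on a past compatible with $A_s(x) = 0$ the argument above applies verbatim. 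A final routine point is to confirm that the exceptional truncation event contributes nothing: by \eqref{eq_truncated_couple} and Theorem \ref{thm_log_growth}, for the fixed $n$ chosen the truncation time $\Gamma$ exceeds $t$ with probability $1 - O(n^{-C})$, and on $\{\Gamma > t\}$ the truncated and untruncated processes agree on $[0,t]$, so conditioning on this event changes the transition probability by a factor $1 + o(1)$ that disappears in the $\Delta s \to 0$ limit.
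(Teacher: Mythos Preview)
Your proposal is correct and follows essentially the same strategy as the paper: invoke Lemma~\ref{lem:approxharmonicmeasure} and Theorem~\ref{thm_summable} to replace $A$ by a large-$n$ approximant $A^n$ whose flip-up rate at $x$ is exactly $\harm_{L_0\cup A^n_s}(x)$, then use the $\epsilon$-closeness of harmonic measures to transfer the rate to the limit. The only noteworthy difference is bookkeeping: the paper conditions on a \emph{deterministic} high-probability event $G_n=\{A^n_\cdot|_K=A_\cdot|_K,\ |\harm_{L_0\cup A^n_\cdot}(x)-\harm_{L_0\cup A_\cdot}(x)|<\epsilon\}$ and lets $n\to\infty$ via dominated convergence, whereas you fix a random $n>N(\omega)$ --- your claim that ``the filtration $\{A_\xi\}_{\xi\le s}$ is rich enough to determine $N$'' is not quite right (since $N$ depends on the coupling up to time $t>s$), and the paper's device of conditioning on $G_n$ is precisely what sidesteps that measurability wrinkle.
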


\begin{proof}
	Let $\epsilon>0$ and $G_n$ be the event that for all $s\le t$ and for all $x\in K$, $A_s^n(x)=A_s(x)$ and in addition, 
	$$
	|\CH_{L_0\cup A_s^n}(x)-\CH_{L_0\cup A_s}(x)|<\epsilon.
	$$
	By Lemma \ref{lem:approxharmonicmeasure} and Theorem \ref{thm_summable}, $\lim_{n\rightarrow\infty}\prob(G_n^c)=0$.
	Now uniformly for all $s<t$ and $\Delta s$ small enough, there is an $n\in\BN$ such that
	\begin{align*}
	&\prob\left( A_{s+\Delta s}(x)=1|A_s(x)=0, \{A_\xi\}_{\xi\le s}\right)\\
	&\in \prob\left( A_{s+\Delta s}(x)=1|A_s(x)=0, \{A_\xi\}_{\xi\le s}, G_n\right)+(-\epsilon,\epsilon)\\
	&=\prob\left( A^n_{s+\Delta s}(x)=1|A^n_s(x)=0, \{A_\xi\}_{\xi\le s}, G_n\right)+(-\epsilon,\epsilon)\\
	&\in\prob\left( A^n_{s+\Delta s}(x)=1|A^n_s(x)=0, |\CH_{L_0\cup A_s^n}(x)-\CH_{L_0\cup A_s}(x)|<\epsilon, A_s \right)+(-2\epsilon,2\epsilon)\\
	&\in (1-e^{\Delta s(\harm_{L_0\cup A_s}(x)+\epsilon)},1-e^{\Delta s(\harm_{L_0\cup A_s}(x)-\epsilon)})+(-2\epsilon,2\epsilon)
	,\end{align*}
	where we use dominated convergence theorem for the first and second approximations. Now taking $\epsilon\rightarrow 0$ and then $\Delta s\rightarrow 0$ we obtain the result. 
\end{proof}

\begin{proof}[Proof of Theorem \ref{theorem_well_define}]
By Lemma \ref{lem:correct_rate} we obtain that the almost sure limit $\{A_s\}_{s\le t}:=\lim_{m\rightarrow\infty} \{A_s^m\}_{s\le t}$ obtained in Theorem \ref{thm_summable} is a SDLA. 
\end{proof}

%%%%%%%%%%%%%%%%%%%%%%%%%%%%%%%%%%%%%%%%%%%%%%%%%%%%%%%%%%%%%%%%%%%%%%

%%%%%%%%%%%%%%%%%%%%%%%%%%%%%%%%%%%%%%%%%%%%%%%%%%%%%%%%%%%%%%%%%%%%%%

%%%%%%%%%%%%%%%%%%%%%%%%%%%%%%%%%%%%%%%%%%%%%%%%%%%%%%%%%%%%%%%%%%%%%%

%%%%%%%%%%%%%%%%%%%%%%%%%%%%%%%%%%%%%%%%%%%%%%%%%%%%%%%%%%%%%%%%%%%%%%

%%%%%%%%%%%%%%%%%%%%%%%%%%%%%%%%%%%%%%%%%%%%%%%%%%%%%%%%%%%%%%%%%%%%%%

%%%%%%%%%%%%%%%%%%%%%%%%%%%%%%%%%%%%%%%%%%%%%%%%%%%%%%%%%%%%%%%%%%%%%%

%%%%%%%%%%%%%%%%%%%%%%%%%%%%%%%%%%%%%%%%%%%%%%%%%%%%%%%%%%%%%%%%%%%%%%

\section{Proof of Theroem \ref{thm:ergodicity}: Ergodocity of the SDLA}
%%%%%%%%%%%%%%%%%%%%%%%%%%%%%%%%%%%%%%%%%%%%%%%%%%%%%%%%%%%%%%%%%%%%%%

%%%%%%%%%%%%%%%%%%%%%%%%%%%%%%%%%%%%%%%%%%%%%%%%%%%%%%%%%%%%%%%%%%%%%%

%%%%%%%%%%%%%%%%%%%%%%%%%%%%%%%%%%%%%%%%%%%%%%%%%%%%%%%%%%%%%%%%%%%%%%

%%%%%%%%%%%%%%%%%%%%%%%%%%%%%%%%%%%%%%%%%%%%%%%%%%%%%%%%%%%%%%%%%%%%%%

%%%%%%%%%%%%%%%%%%%%%%%%%%%%%%%%%%%%%%%%%%%%%%%%%%%%%%%%%%%%%%%%%%%%%%

%%%%%%%%%%%%%%%%%%%%%%%%%%%%%%%%%%%%%%%%%%%%%%%%%%%%%%%%%%%%%%%%%%%%%%

%%%%%%%%%%%%%%%%%%%%%%%%%%%%%%%%%%%%%%%%%%%%%%%%%%%%%%%%%%%%%%%%%%%%%%

%%%%%%%%%%%%%%%%%%%%%%%%%%%%%%%%%%%%%%%%%%%%%%%%%%%%%%%%%%%%%%%%%%%%%%
\begin{proof}
	By Lemma \ref{lem:correct_rate} and the fact that the stationary harmonic measure is (well...) stationary, we obtain that $A^\infty_t$ is stationary with respect to the translation $\lambda_n(A^\infty_t)=A^\infty_t+n$, for any $n\in\BZ$. It is enough then to prove that $A^\infty_t$ is strongly mixing. Let $t>0$ and $K_1, K_2$ be two finite subsets of $\BH$ of distance  $\max\{|x_1-x_2|:x_1\in K_1, x_2\in K_2\}>2n$ ($n$ will be chosen big enough). We now consider two copies of $A_t^n$ constructed according to Poisson thinning of the same interface model, ${A}_t^n(1)$ is centered around an arbitrary point $x_1\in K_1$ and ${A}_t^n(2)$ is centered around an arbitrary point $x_2\in K_2$. For $i\in\{1,2\}$ and configurations $\xi_i\in\{0,1\}^{K_i}$.
	Define the events:
	\begin{align}
	\mathfs{B}_i&=\{A_t^\infty\cap K_i=\xi_i\}\\
	\mathfs{C}_i&=\{A_t^n(i)\cap K_i=\xi_i\}\\
	\mathfs{D}_i&=\{\max_{x\in A_t^n(i)}|x-x_i|<n/2\}
	\end{align}
	Under the event $\mathfs{D}_1\cap\mathfs{D}_2$ the events $\mathfs{C}_1$ and $\mathfs{C}_2$ are independent. This follows from the independence of Poisson processes on non intersecting domains. Moreover we know by Theorem \ref{thm_log_growth} that
	
	$$
	\lim_{n\rightarrow\infty}\prob\left(  \mathfs{D}_1^c\cup\mathfs{D}_2^c  \right)=0
	,$$
	and by Theorem \ref{thm_summable} that 
	$$\lim_{n\rightarrow\infty}\prob\left(\mathfs{B}_1\setminus \mathfs{C}_1\cup \mathfs{B}_2\setminus \mathfs{C}_2\right)=0.$$
	Thus
	\begin{align}
	\lim_{n\rightarrow\infty}\prob(\mathfs{B}_1\cap\mathfs{B}_2)&=\lim_{n\rightarrow\infty}\prob(\mathfs{C}_1\cap\mathfs{C}_2|\mathfs{D}_1\cap\mathfs{D}_2)=\lim_{n\rightarrow\infty}\prob(\mathfs{C}_1|\mathfs{D}_1\cap\mathfs{D}_2)\cdot \prob(\mathfs{C}_2|\mathfs{D}_1\cap\mathfs{D}_2)\\
	&=\lim_{n\rightarrow\infty}\prob(\mathfs{B}_1)\cdot \prob(\mathfs{B}_2)=\prob(\mathfs{B}_1)\cdot \prob(\mathfs{B}_2),
	\end{align}
	where in the last equality we used stationarity and abused notations to clarify that the limit is actually a constant sequence. 
\end{proof}
%%%%%%%%%%%%%%%%%%%%%%%%%%%%%%%%%%%%%%%%%%%%%%%%%%%%%%%%%%%%%%%%%%%%%%

%%%%%%%%%%%%%%%%%%%%%%%%%%%%%%%%%%%%%%%%%%%%%%%%%%%%%%%%%%%%%%%%%%%%%%

%%%%%%%%%%%%%%%%%%%%%%%%%%%%%%%%%%%%%%%%%%%%%%%%%%%%%%%%%%%%%%%%%%%%%%

%%%%%%%%%%%%%%%%%%%%%%%%%%%%%%%%%%%%%%%%%%%%%%%%%%%%%%%%%%%%%%%%%%%%%%

%%%%%%%%%%%%%%%%%%%%%%%%%%%%%%%%%%%%%%%%%%%%%%%%%%%%%%%%%%%%%%%%%%%%%%

%%%%%%%%%%%%%%%%%%%%%%%%%%%%%%%%%%%%%%%%%%%%%%%%%%%%%%%%%%%%%%%%%%%%%%

%%%%%%%%%%%%%%%%%%%%%%%%%%%%%%%%%%%%%%%%%%%%%%%%%%%%%%%%%%%%%%%%%%%%%%

%%%%%%%%%%%%%%%%%%%%%%%%%%%%%%%%%%%%%%%%%%%%%%%%%%%%%%%%%%%%%%%%%%%
\section*{Acknowledgments}
We would like to thank Noam Berger for many fruitful discussions. Part of this paper was written while the first two authors were visitors of Peking university.

\bibliography{career}
\bibliographystyle{plain}

\end{document}